\newtheorem{theorem}{Theorem}[section]
\newtheorem{proposition}[theorem]{Proposition}
\newtheorem{lemma}[theorem]{Lemma}
\newtheorem{definition}[theorem]{Definition}
\newtheorem{corollary}[theorem]{Corollary}
\newtheorem{conjecture}[theorem]{Conjecture}
\newtheorem{remark}[theorem]{Remark}
\newtheorem{notation}[theorem]{Notation}
\numberwithin{equation}{section}
\newcommand{\R}{\mathbb{R}}
\begin{document}
\title[Pinned Dot Products]{Pinned Dot Product Set Estimates}

\author[P.~Bright]{Paige Bright}
\address{Department of Mathematics \\ University of British Columbia, 1984 Mathematics Road \\ Vancouver, BC, Canada V6T 1Z2}
\email{paigeb@math.ubc.edu}

\author[C.~Marshall]{Caleb Marshall}
\address{Department of Mathematics \\ University of British Columbia, 1984 Mathematics Road \\ Vancouver, BC, Canada V6T 1Z2}
\email{cmarshall@math.ubc.ca}

\author[S.~Senger]{Steven Senger}
\address{Department of Mathematics \\ Missouri State University, 901 S National Avenue \\ Springfield, MO, USA, 65897}
\email{stevensenger@missouristate.edu}

\date{\today}

\begin{abstract}
We study a variant of the Falconer distance problem for dot products. In particular, for fractal subsets $A\subset \R^n$ and $a,x\in \R^n$, we study sets of the form
\[
\Pi_x^a(A) := \{\alpha \in \R : (a-x)\cdot y= \alpha, \text{ for some $y\in A$}\}.
\]
We discuss some of what is already known to give a picture of the current state of the art, as well as prove some new results and special cases. We obtain lower bounds on the Hausdorff dimension of $A$ to guarantee that $\Pi^a_x(A)$ is large in some quantitative sense for some $a\in A$ (i.e. $\Pi_x^a(A)$ has large Hausdorff dimension, positive measure, or nonempty interior). Our approach to all three senses of ``size'' is the same, and we make use of both classical and recent results on projection theory.
\end{abstract}

\maketitle


\section{Introduction}

We begin by discussing the problem that motivated this study. In 1985, Falconer \cite{Falconer_1985} introduced a classical question in geometric measure theory which is now commonly referred to as the Falconer distance problem. The problem asks for a threshold value $s$ so that any compact set $A\subset \R^n$ whose Huasdorff dimension is greater than $s$ will have the property that the set of distances between pairs of points in $A$ has positive Lebesgue measure. Define the \textit{distance set} of $A$ to be the set 
\[
\Delta(A) := \{|x-y| : x,y\in A\}.
\]
More precisely, Falconer conjectured the following.

\begin{conjecture}[Falconer]
    Let $A\subset \R^n$ with $n\geq 2$ be a compact set. Then
    \[
    \dim (A) > \frac{n}{2} \quad \implies \quad \mathcal H^1(\Delta(A)) >0.
    \]
    Here and throughout the paper, $\dim(\cdot)$ denotes Hausdorff dimension and $\mathcal H^1(\cdot)$ denotes the 1-dimensional Hausdorff measure.
\end{conjecture}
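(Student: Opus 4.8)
The plan is to follow the standard Fourier-analytic route to the Falconer distance problem. First I would invoke Frostman's lemma to produce a Borel probability measure $\mu$ supported on $A$ whose $s$-energy $I_s(\mu) = \iint |x-y|^{-s}\,d\mu(x)\,d\mu(y)$ is finite for some $s > n/2$, which is possible precisely because $\dim(A) > n/2$. Pushing the product $\mu \times \mu$ forward under the distance map $(x,y) \mapsto |x-y|$ yields a measure $\nu$ supported on $\Delta(A)$, and it suffices to show that $\nu$ is absolutely continuous with an $L^2$ density, since that forces $\mathcal H^1(\Delta(A)) > 0$. A classical computation of Mattila (polar coordinates together with Plancherel and the asymptotics of the Fourier transform of spherical measure) expresses $\|\nu\|_{L^2}^2$, up to a harmless bounded term, as the Mattila integral
\[
\int_1^\infty \left( \int_{S^{n-1}} |\widehat{\mu}(R\omega)|^2 \, d\sigma(\omega) \right)^2 R^{n-1} \, dR,
\]
so the entire problem reduces to showing this is finite whenever $I_s(\mu) < \infty$ for some $s > n/2$.

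The decisive ingredient is then a decay estimate for the spherical averages $\Sigma_\mu(R) := \int_{S^{n-1}} |\widehat{\mu}(R\omega)|^2 \, d\sigma(\omega)$. The bound that would close the problem is the pointwise decay $\Sigma_\mu(R) \lesssim I_s(\mu)\, R^{-s}$ for all $s \le n/2$: feeding it into the Mattila integral gives an integrand of size $R^{n-1-2s}$, which is integrable near infinity exactly when $s > n/2$. (The finiteness of $I_s(\mu)$ by itself only yields this decay on average in $dR/R$, which is why a genuine pointwise-in-$R$ estimate is what is needed.) To extract partial results I would interpolate between the trivial bound $\Sigma_\mu(R) \le 1$ and $L^2$-based decay coming from the restriction/extension theory of the sphere: the Stein--Tomas inequality gives the sharp exponent up to $s = (n-1)/2$ (Mattila's theorem), bilinear and multilinear restriction push it to roughly $s = (n+2)/4$ in low dimensions (Wolff, Erdo\u{g}an), and in the plane the refined decoupling and restriction machinery of Guth--Iosevich--Ou--Wang reaches $s = 5/4$.

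The main obstacle is that this is precisely where every known approach stalls: obtaining the sharp spherical-average decay all the way up to $s = n/2$ is essentially equivalent to the conjecture itself, and no currently available restriction-type estimate is strong enough to close the residual gap above the best exponent. Alternative routes --- reformulating through pinned distance sets and $L^2$ bounds for the associated bilinear averaging operator in the style of Liu, or the radial-projection and projection-theoretic methods of Orponen and Shmerkin --- encounter the same barrier and produce comparable partial thresholds. I therefore would not expect to prove the full conjecture; the realistic deliverable is a conditional statement (the conjecture follows from the sharp spherical-average conjecture) together with the best unconditional exponent extractable from present-day decoupling and restriction theory, which is the kind of state-of-the-art picture the paper sets out to describe before turning to the dot-product analogue $\Pi_x^a(A)$.
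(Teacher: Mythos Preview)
The statement you have been asked to address is Falconer's \emph{conjecture}, and the paper does not prove it; it is presented there purely as motivation and background, with the authors immediately noting that it remains open and recording the current best thresholds ($5/4$ in the plane via Guth--Iosevich--Ou--Wang, and $\frac{n}{2}+\frac{1}{4}-\frac{1}{8n+4}$ in higher dimensions via Du--Ou--Ren--Zhang). There is therefore no ``paper's own proof'' to compare against.

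Your write-up correctly recognises this: you outline the Frostman/Mattila-integral framework, identify the spherical-average decay $\Sigma_\mu(R)\lesssim R^{-s}$ for $s$ up to $n/2$ as the missing ingredient, and explicitly concede that no existing technique closes this gap. That diagnosis is accurate and matches the state of the art the paper summarises. One small update worth making is that the sharpest current results (including the two cited above) are obtained not solely through restriction/decoupling bounds on $\Sigma_\mu(R)$ but by combining those with radial-projection estimates (Orponen; Ren) applied to the pinned distance map --- a route you mention only in passing but which is in fact the dominant mechanism behind the most recent improvements, and indeed the mechanism the paper then adapts to the dot-product problem.
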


The study of this conjecture has utilized (and motivated work on) a number of cutting edge mathematical tools. In some way, it has served as a measuring stick for how much we can say about the rich relationships between measure theory, Fourier analysis and geometry. Early investigations by Mattila \cite{mattila1985}, Wolff \cite{wolff1999}, Bourgain \cite{bourgain2003}, and Erdo\u gan \cite{erdogan2006}, have since been built upon using techniques such as Kolmogorov complexity \cite{altaf2023distancesetsboundspolyhedral}, Fourier decoupling \cite{GuthFalconerPlane, du2023weightedrefineddecouplingestimates}, and projection theory \cite{ren2023discretized}. The current best known lower bounds towards the conjectures give:
\[
\begin{cases}
    \frac{5}{4}, & n=2 \quad \quad \text{(Guth--Iosevich--Ou--Wang \cite{GuthFalconerPlane})} \\
    \frac{n}{2} + \frac{1}{4} - \frac{1}{8n + 4}, & n\geq 3 \quad \quad \text{(Du--Ou--Ren--Zhang \cite{du2023newimprovementfalconerdistance})}.
\end{cases}
\]
We note here that both results above are proven utilizing results from projection theory; Guth--Iosevich--Ou--Wang utilize a radial projection result of Orponen \cite{Orponen_2019} and  Du--Ou--Ren--Zhang utilize Ren's discretized radial projection theorem \cite{ren2023discretized}. 

Both of the previous results for the Falconer distance set problem are a consequence of a \textit{pinned} result. Both Guth--Ou--Wang--Zhang and Du--Ou--Ren--Zhang show that in the range of $\dim A$ stated above, there exists an $a\in A$ so that the \textit{pinned distance set} pinned at $a$,
$$
\Delta^a(A) := \{|a - y| : y\in A\},
$$
has positive measure (which thus clearly implies the unpinned statement above). Finding lower bounds on the Hausdorff dimension of $A$ such that there exists a ``good pin'' $a\in A$ such that $|\Delta^a(A)| >0$ is sometimes referred to as the pinned distance set problem.

Motivated by this pinned distance set problem, as well as by recent results in projection theory (such as those involving radial projections \cite{orponen2024kaufman, ren2023discretized, bright2024radialprojections}, as well as estimates involving orthogonal projections  \cite{pramanikyangzahl, zahl2023maximalfunctionsassociatedfamilies, ren2023furstenberg}), we study \textit{pinned dot product sets} over Euclidean space. Given some $A\subset \R^n$ and $a,x\in \R^n$, we define the set of \textit{dot products of $A$ pinned at $a$ and translated by $x$} as
\[
\Pi_x^a(A):= \{\alpha \in \R : (a-x)\cdot y = \alpha, \text{ for some $y\in A$}\}.
\]
If $x$ is the origin, this is simply the set of \textit{pinned dot products of $A$ pinned at $a$}. 

The dot product analog of the Falconer distance problem asks: how large does $\dim A$ need to be so that $\Pi_x^a(A) \subset \R$ is large in some quantitative sense? However, unlike in the pinned distance set problem, we also introduce the translation parameter $x$. This allows us to prove results both for general $x \in \mathbb{R}^n$ (i.e. a translation invariant result) as well as results over ``nice-enough'' sets of translations $X_A \subset \mathbb{R}^n$ (i.e. translation dependent results). What follows will be statements of results that are either translation invariant or translation dependent, some with additional assumptions, given alongside known results to provide a survey of what is known about the dot product variant of the Falconer problem.

\subsection{Translation invariant results} Our first main result gives an answer to this question regardless of choice of $x\in \R^n$. Note that we say $A'\subset A$ is \textit{full dimensional} if $\dim A' = \dim A$.

\medskip

\begin{theorem} \label{thm-main}
    Let $A\subset \R^n$ Borel ($n\geq 2$) with $\dim A = s$.
    \medskip
    
    \begin{enumerate}
        \item Let $0< u \leq 1$ and suppose $s > \frac{n+u}{2}$ for $n\geq 2$. Then, for all $x\in \R^n$, there exists a full dimensional Borel subset $A_x\subset A$ such that 
        \[
        \dim \Pi_x^a(A) \geq u \text{ for all $a\in A_x$.}
        \]
        \item Suppose $s>\frac{n+1}{2}.$ Then, for all $x\in \R^n$, there exists a full dimensional Borel subset $A_x\subset A$ such that 
        \[
        \mathcal H^1(\Pi_x^a(A)) >0, \text{ for all $a\in A_x$}.
        \]
        \item Suppose $s > \frac{n+2}{2}$ for $n\geq 3.$ Then, for all $x\in \R^n$, there exists a full dimensional Borel subset $A_x\subset A$ such that
        \[
        \Pi_x^a(A) \text{ has nonempty interior for all $a\in A_x$}.
        \]
    \end{enumerate}
\end{theorem}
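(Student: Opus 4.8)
The plan is to reduce all three parts to exceptional-set estimates for \emph{orthogonal} projections onto lines, together with a soft lower bound for radial projections valid for every pin. Write $\rho_x(y):=(y-x)/|y-x|$ for the radial projection from $x$, and for $e\in S^{n-1}$ let $\pi_e\colon\R^n\to\R$ be $\pi_e(y)=e\cdot y$. If $a\neq x$ and $e_a:=\rho_x(a)$, then $\Pi_x^a(A)=|a-x|\,\pi_{e_a}(A)$ \emph{exactly}, and multiplication by the positive scalar $|a-x|$ preserves Hausdorff dimension, positivity of $\mathcal H^1$-measure, and having nonempty interior. So, fixing $x$, it suffices to find a full-dimensional Borel $A_x\subset A\setminus\{x\}$ with $\pi_{e_a}(A)$ ``good'' for every $a\in A_x$, where good means: (1) dimension $\ge u$; (2) positive $\mathcal H^1$-measure; (3) nonempty interior. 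Let $E\subset S^{n-1}$ be the set of directions along which $\pi_e$ fails to be good (in case (3), computed with a fixed compact $K\subset A$, $\dim K$ just below $s$, in place of $A$). Once $\dim(A\cap\rho_x^{-1}(E))<\dim A$ is known, one picks a Borel $G\supseteq A\cap\rho_x^{-1}(E)$ of the same (hence $<s$) Hausdorff dimension and sets $A_x:=A\setminus(\{x\}\cup G)$: this is Borel, $\dim A_x=\dim A$, and $e_a\notin E$ for every $a\in A_x$, which is what we want. Everything thus reduces to proving $\dim(A\cap\rho_x^{-1}(E))<s$.

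Two ingredients do this. First, a projection theorem bounds $\dim E$: in case (1) the Kaufman--Mattila estimate gives $\dim E\le\max\{0,\,n-1+u-s\}$; in case (2), since $s>1$, the Falconer--Mattila estimate gives $\dim E\le\max\{0,\,n-s\}$; in case (3), since $s>2$ (guaranteed by $n\ge 3$ together with $s>\tfrac{n+2}{2}$), a Peres--Schlag-type Sobolev estimate gives $\dim E\le\max\{0,\,n+1-\dim K\}$ --- using $\dim K>2$, a Frostman measure $\mu$ on $K$ has its push-forward under $\pi_e$ in $H^\gamma(\R)$ for some fixed $\gamma>\tfrac12$, hence with a continuous density $g$, for all $e$ outside a set of dimension $\le n+1-\dim K$; since $\int g=1>0$ and $\pi_e(K)$ is closed, the nonempty open set $\{g>0\}$ lies in $\pi_e(K)\subset\pi_e(A)$. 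Second, the soft bound
\[
\dim\rho_x(C\setminus\{x\})\ \ge\ \dim C-1\qquad\text{for every }C\subset\R^n,
\]
obtained by cutting $C$ into dyadic spherical shells about $x$: on each shell the map $y\mapsto(\rho_x(y),|y-x|)$ is bi-Lipschitz, identifying $\rho_x$ there with a coordinate projection of a subset of $S^{n-1}\times I$ for a bounded interval $I$, and such a projection drops Hausdorff dimension by at most $1$. Now if $\dim(A\cap\rho_x^{-1}(E))=s$, apply the soft bound to $C:=A\cap\rho_x^{-1}(E)$; since $\rho_x(C\setminus\{x\})\subseteq E$ this forces $\dim E\ge s-1$. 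But $s>\tfrac{n+u}{2}$, $s>\tfrac{n+1}{2}$, and $s>\tfrac{n+2}{2}$ are exactly the inequalities making $\dim E<s-1$ in the three cases (in case (3) one takes $\dim K$ close enough to $s$ that $n+1-\dim K<s-1$, possible as $s>\tfrac{n+2}{2}$). This contradiction gives $\dim(A\cap\rho_x^{-1}(E))<s$, completing the proof.

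The least routine input --- and where I would expect to spend the most effort --- is the case (3) exceptional-set estimate: controlling the dimension of the set of directions along which a projection has empty interior, with the exponent $n+1-s$ that fixes the threshold $\tfrac{n+2}{2}$; cases (1) and (2) use only classical results. The radial-projection bound, by contrast, is elementary and, decisively, needs \emph{no} non-degeneracy hypothesis on $x$ or on $A$, which is precisely why all three conclusions are translation invariant; inserting sharper radial projection theorems (e.g.\ \cite{Orponen_2019,ren2023discretized,bright2024radialprojections}) would lower the thresholds but is not needed here. Everything else --- inner regularity to produce $K$, Borel hulls, and the arithmetic --- is bookkeeping.
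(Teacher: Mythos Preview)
Your proposal is correct and follows essentially the same route as the paper: the Key Lemma identifying $\Pi_x^a(A)$ with a scaled copy of $P_{\pi_x(a)}(A)$, the soft radial bound $\dim\pi_x(C)\ge\dim C-1$ valid for every $x$, the three exceptional-set estimates of Kaufman--Mattila, Falconer, and Peres--Schlag, and the same contradiction to show $A_x$ is full dimensional. Your treatment is in fact slightly more careful than the paper's in two places --- passing to a compact $K\subset A$ in case~(3) so that $\pi_e(K)$ is closed and genuinely contains the open set produced by the continuous density, and taking a Borel hull of the bad set so that $A_x$ is visibly Borel --- but these are routine refinements of the same argument.
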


As far as the authors know, Theorem \ref{thm-main}.(1) and (3) 
are new to the literature. The statement (1) can be seen as an analog to the distance results of Oberlin--Oberlin in \cite{OO}, while the statement (3) is a Mattila--Sj\"olin type result, as seen in \cite{mattilaSjolin}. Theorem \ref{thm-main}.(2), on the other hand, has been known has been known in the literature since 2016 due to the more general result of Iosevich--Taylor--Uriarte-Tuero \cite{iosevich2016pinned}. In fact, their result applies for a wide class of functionals. Specifically, for a general functional $\phi(x,y)$ in place of the dot product or distance, they proved that result analogous to Theorem \ref{thm-main}.(2) holds if $\phi$ is continuous and infinitely differentiable almost everywhere on $A\times A,$ and has the property that the Monge-Ampere determinant
\[\det\left[\begin{tabular}{c c} $0$ & $\nabla_x\phi$\\ $-(\nabla_y\phi)^T$ & $\frac{\partial^2\phi}{\partial x_i \partial y_j}$\end{tabular}\right]\]
does not vanish on any of the non-zero level sets of $\phi,$ which, for $t\neq0$ are given by $\{(x,y):\phi(x,y)=t\}.$ In fact, their proof shows even more, but we do not detail it here. Because we are focusing on the case where $\phi(x,y) = x \cdot y,$ we use classical ideas from projection theory for radial and orthogonal projections. This is a natural tool to consider, as we recall that one of the primary goals of Iosevich--Taylor--Uriarte-Tuero was to generalize some work of Peres and Schlag \cite{pxxschlag2000}, whose original method of proof relied on non-linear projections. These tools are discussed in Section \ref{sec:method}.  




One benefit of the method of projections is that it highlights a general heuristic that ``better projection estimates imply better dot product results.'' This provides a certain immediacy of improvement of the dimensional lower bounds of Theorem \ref{thm-main}, under the assumption our set $A \subset \mathbb{R}^n$ has additional structure, as has been observed in other settings, such as in Hart--Iosevich--Koh--Rudnev \cite{HIKR}.
For example, we can obtain easy improvements assuming Fourier analytic structure of our set $A$. If $A \subset \mathbb{R}^n$, is Borel, let $\mathcal{M}(A)$ denote the space of Borel probability measures supported on $A$. We then define the \textit{Fourier dimension} of $A$ as
$$
\dim_F A : = \sup \{s : \exists \, \mu \in \mathcal{M}(A) \textrm{ with } \vert \hat{\mu} (\xi) \vert \lesssim \vert \xi \vert^{-s/2}, \forall \, \xi \in \mathbb{R}^n \}.
$$
We now state a basic property of Fourier dimension that we will use later. A proof can be found in \cite{mattila1999}, Section 12.17.
\begin{proposition}\label{fDimProp}
For any $A \subset \mathbb{R}^n$,
\[\dim_F A \leq \dim A.\]
\end{proposition}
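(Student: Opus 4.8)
The plan is to derive the inequality from the classical Fourier-analytic description of Hausdorff dimension via Riesz energies (Frostman's lemma). Recall that for a finite Borel measure $\mu$ on $\mathbb{R}^n$ and an exponent $0<t<n$, the $t$-energy
\[
I_t(\mu) := \iint |x-y|^{-t}\, d\mu(x)\, d\mu(y)
\]
admits the Fourier representation $I_t(\mu) = c(n,t)\int_{\mathbb{R}^n} |\hat\mu(\xi)|^2\, |\xi|^{t-n}\, d\xi$ for some positive constant $c(n,t)$, and that if $I_t(\mu) < \infty$ for some $\mu \in \mathcal{M}(A)$ then $\dim A \geq t$. Both of these are standard and may be found in \cite{mattila1999}. (As usual we may take $\dim_F A \leq n$, so that the exponents below lie in the admissible range.)

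Fix any $t$ with $0 < t < \dim_F A$; it suffices to prove $\dim A \geq t$, since then letting $t \uparrow \dim_F A$ gives the claim. Choose $s$ with $t < s < \dim_F A$ and, by the definition of the Fourier dimension, a measure $\mu \in \mathcal{M}(A)$ with $|\hat\mu(\xi)| \lesssim |\xi|^{-s/2}$ for all $\xi \in \mathbb{R}^n$. Applying the energy identity and splitting the frequency integral at $|\xi| = 1$, I would estimate
\[
I_t(\mu) \;=\; c(n,t)\left( \int_{|\xi| \leq 1} |\hat\mu(\xi)|^2\, |\xi|^{t-n}\, d\xi \;+\; \int_{|\xi| > 1} |\hat\mu(\xi)|^2\, |\xi|^{t-n}\, d\xi \right).
\]
On the low-frequency piece one uses only the trivial bound $|\hat\mu(\xi)| \leq \mu(\mathbb{R}^n) = 1$, so the integrand is at most $|\xi|^{t-n}$, which is integrable over the unit ball precisely because $t > 0$. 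On the high-frequency piece the hypothesis gives $|\hat\mu(\xi)|^2 \lesssim |\xi|^{-s}$, so the integrand is $\lesssim |\xi|^{t-s-n}$, which is integrable over $\{|\xi|>1\}$ precisely because $t - s - n < -n$, i.e. because $t < s$. Hence $I_t(\mu) < \infty$, and therefore $\dim A \geq t$.

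The statement is classical, so there is no deep obstacle; the two points requiring care are the invocation of the Fourier representation of the Riesz energy (valid exactly when $0<t<n$, which is why one restricts to $\dim_F A\le n$) and, what I would view as the main point, the splitting of the frequency integral: the low frequencies are controlled simply because $t>0$, while it is the decay hypothesis $|\hat\mu(\xi)|\lesssim|\xi|^{-s/2}$ together with $t<s$ that makes the high frequencies integrable. This split is precisely the mechanism by which a Fourier decay rate $s$ on some $\mu\in\mathcal{M}(A)$ certifies Hausdorff dimension at least $s$.
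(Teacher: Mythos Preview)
Your argument is correct and is precisely the standard proof via the Fourier representation of the Riesz energy. The paper does not actually supply its own proof of this proposition; it merely cites \cite{mattila1999}, Section 12.17, where the argument you have written (finite $t$-energy from Fourier decay, hence $\dim A\ge t$) is exactly what appears. So there is nothing to compare: you have reproduced the referenced proof.
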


The following result shows a distinct connection between Fourier dimension and the Hausdorff dimension of the dot product set.
\begin{proposition}\label{prop:fourierdimension}
Suppose that $A \subset \mathbb{R}^n$ satisfies $\dim A > 1$ and $\dim_{F} A \geq u$ for some $u \in (0,1]$. Then, for every $a,x \in \mathbb{R}^n$ with $x\neq a$, one necessarily has that
$$
\dim \big(\Pi_x^a (A) \big) \geq u.
$$
\end{proposition}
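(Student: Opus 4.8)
The plan is to realize $\Pi_x^a(A)$ as a one‑dimensional linear image of $A$ and to push forward a Salem‑type measure witnessing the Fourier dimension of $A$. Fix $a\neq x$ and set $v:=a-x\in\mathbb{R}^n\setminus\{0\}$, so that $\Pi_x^a(A)=T(A)$ for the linear functional $T\colon\mathbb{R}^n\to\mathbb{R}$, $T(y)=v\cdot y$. Fix any $s<u$. Since $\dim_F A\geq u$, there is $\mu\in\mathcal{M}(A)$ with $|\widehat{\mu}(\xi)|\lesssim_s|\xi|^{-s/2}$ for all $\xi\in\mathbb{R}^n$. Let $\nu:=T_{*}\mu$ be the pushforward, a Borel probability measure on $\mathbb{R}$. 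Because $\mu(\mathbb{R}^n\setminus A)=0$ and $T^{-1}(\mathbb{R}\setminus T(A))\subseteq\mathbb{R}^n\setminus A$, we have $\nu\big(\mathbb{R}\setminus\Pi_x^a(A)\big)=0$, i.e.\ $\nu$ is carried by $\Pi_x^a(A)$.

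First I would record the Fourier decay of $\nu$. A change of variables in the defining Fourier integral gives $\widehat{\nu}(t)=\widehat{\mu}(tv)$, hence
\[
|\widehat{\nu}(t)|=|\widehat{\mu}(tv)|\lesssim_s |v|^{-s/2}|t|^{-s/2}\qquad\text{for }|t|\geq|v|^{-1},
\]
and $|\widehat{\nu}(t)|\leq 1$ for all $t$. Then I would insert this into the standard energy–Fourier identity on the line,
\[
I_\gamma(\nu):=\iint |\alpha-\beta|^{-\gamma}\,d\nu(\alpha)\,d\nu(\beta)=c_\gamma\int_{\mathbb{R}}|\widehat{\nu}(t)|^2\,|t|^{\gamma-1}\,dt\qquad(0<\gamma<1).
\]
Splitting the integral at $|t|=|v|^{-1}$, the low‑frequency part is finite since $\gamma>0$, while the high‑frequency part is controlled by $|v|^{-s}\int_{|t|\geq|v|^{-1}}|t|^{\gamma-1-s}\,dt$, which converges exactly when $\gamma<s$. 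This is the only point where $u\leq1$ enters: it ensures $\min(s,1)=s$, so the constraint $\gamma<1$ coming from the identity is not binding. Consequently $I_\gamma(\nu)<\infty$ for every $\gamma<s$.

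Finally I would convert finite energy into a dimension lower bound. Since $\nu$ is Radon and $\Pi_x^a(A)=T(A)$ is analytic (a continuous image of a Borel set), by inner regularity there is a compact $K\subseteq\Pi_x^a(A)$ with $\nu(K)>0$; then $\nu|_K$ has positive mass and $I_\gamma(\nu|_K)\leq I_\gamma(\nu)<\infty$, so the classical energy characterization of Hausdorff dimension (Frostman) gives $\dim\Pi_x^a(A)\geq\dim K\geq\gamma$. Letting $\gamma\uparrow s$ and then $s\uparrow u$ yields $\dim\Pi_x^a(A)\geq u$.

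There is no serious obstacle here; the argument is essentially routine. The one conceptual point worth emphasizing is that a Fourier dimension bound controls \emph{every} one‑dimensional linear image of $A$ simultaneously — in contrast to Marstrand‑type projection theorems, which only handle almost every direction — and this is exactly what the statement needs, since $v=a-x$ is a prescribed direction. The only technical care required is in justifying the energy–Fourier identity for $\nu$ (harmless after reducing to bounded $A$, so that $\nu$ is compactly supported) and in the measurability bookkeeping of the last paragraph, needed because $\Pi_x^a(A)$ is a priori only analytic rather than Borel. We also note that the hypothesis $\dim A>1$ is not used in the argument above.
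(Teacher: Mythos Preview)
Your proof is correct and follows essentially the same route as the paper: both push forward a Salem measure along the linear map $y\mapsto(a-x)\cdot y$, observe that the pushforward's Fourier transform is $\hat\mu$ restricted to a line and hence inherits the decay, and convert this into a Hausdorff-dimension lower bound (the paper packages the last step as $\dim_F P_\theta(A)\geq u$ for every $\theta$, together with $\dim_F\leq\dim$ and the Key Lemma, whereas you spell out the energy integral directly). Your observation that the hypothesis $\dim A>1$ is not actually used is also correct --- the paper invokes it to ensure $\dim\pi_x(A)>0$, but once one has shown $E_u(A)=\emptyset$ this is superfluous for the stated conclusion.
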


Both results stated above follow almost immediately from the method of proof for Theorem \ref{thm-main}, once we utilize a well-known result regarding the orthogonal projection of sets with positive Fourier dimension.

So far, we have focused on dot products determined by pairs of points. However, variants where multiple dot products are considered between more than two points have a rich history, as in, for example, \cite{iosevich2016pinned}. As an application of some of these and related results, we follow the work of Ou and Taylor \cite{OT2020} and prove analogous results for tree configurations in Section \ref{sec:trees}.

\subsection{Translation dependent results}
While Theorem \ref{thm-main} concerns dot product sets for \textit{any} translation $x \in \mathbb{R}^n$, we can also obtain a version of these estimates with smaller dimensional bounds on $\dim A$ if we only seek out good bounds for \textit{some} $x\in \R^n$. As such, we say such improvements on lower bound is \textit{translation dependent}.

The types of translation sets $X \subset \mathbb{R}^n$  which allow us to improve upon Theorem \ref{thm-main} possess a certain necessary geometry. This geometry is dictated by our method, which applies the work of Orponen--Shmerkin--Wang \cite{orponen2024kaufman} and Ren \cite{ren2023discretized}. Both of these results obtain improved dimensional estimates for radial projections for sets that have a non-concentration property we call $k$-\textit{planar dispersion}. 

\begin{definition}
     Let $X \subset \mathbb{R}^n$ Borel and $k \in \{1,...,n-1\}$. Then, $X$ has $k$-\textbf{planar dispersion} if $\dim (X \setminus P^k) = \dim X$ for all $k$-planes $P^k \subset \mathbb{R}^n$.
\end{definition}

Our second main result concerns this translated variant of Theorem \ref{thm-main}.

\begin{theorem}\label{thm:main-translation}
Let $n \geq 2$ and let $k \in \{1,...,n-1\}$ be some parameter to be chosen below. Suppose that $A, X \subset \mathbb{R}^n$ Borel with $\dim X \geq \dim A = s$ and further suppose $X$ has $k$-planar dispersion. Then, 

\medskip 

\begin{enumerate}
    \item Let $n\geq 3$ and $0 < u \leq 1$. Suppose $s > \frac{n+u -1}{2}$ and let $k = \lceil \frac{n+u-1}{2} \rceil$. Then, there exists a full dimensional subset $X_A \subset X$ such that, for each $x \in X_A$ there exists a full dimensional subset $A_x\subset A$ such that 
    \begin{equation*}
        \dim \big(\Pi_x^a (A)\big) \geq u, \textrm{ for all } a \in A_x.
    \end{equation*}

    \item Suppose $s > \frac{n}{2}$. Let $k = \lceil \frac{n}{2} \rceil$. Then, there exists a full dimensional subset $X_A \subset X$ such that, for each $x \in X_A$ there exists a full dimensional subset $A_x\subset A$ such that
    \begin{equation*}
        \mathcal{H}^1 \big(\Pi_x^a (A)\big) > 0, \textrm{ for all } a \in A_x.
    \end{equation*}
    
    \item Let $n \geq 3$ and suppose $s > \frac{n+1}{2}$. Let $k = \lceil \frac{n+1}{2} \rceil$. Then, there exists a full dimensional subset $X_A \subset X$ such that, for each $x \in X_A$ there exists a full dimensional subset $A_x\subset A$ such that
     \[
        \Pi_x^a(A) \text{ has nonempty interior for all $a\in A_x$}.
        \]
\end{enumerate}
\end{theorem}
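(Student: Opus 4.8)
The plan is to reduce pinned dot product sets to a combination of \emph{orthogonal} and \emph{radial} projections, and then to combine classical projection exceptional-set estimates with the radial projection theorems of Orponen--Shmerkin--Wang \cite{orponen2024kaufman} and Ren \cite{ren2023discretized}. The same argument proves all three parts; only the relevant notion of ``largeness'' and the accompanying exceptional-set bound change. Write $s = \dim A$. For $a \neq x$ set $\theta_{a,x} := (a-x)/|a-x|$, a point of the unit sphere $S^{n-1} \subset \R^n$, and let $\pi_\theta \colon \R^n \to \R$ denote orthogonal projection onto the line $\R\theta$ (identified with $\R$ via $t\theta \mapsto t$). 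Since $(a-x)\cdot y = |a-x|\,\pi_{\theta_{a,x}}(y)$, we have $\Pi_x^a(A) = |a-x|\cdot \pi_{\theta_{a,x}}(A)$, and as scaling by $|a-x|>0$ preserves Hausdorff dimension, positivity of Lebesgue measure, and nonempty interior, it suffices to control $\pi_{\theta_{a,x}}(A)$. Writing $\pi_x(a) := \theta_{a,x}$ for the radial projection centred at $x$, the problem becomes: (i) bound the dimension of the set $E \subset S^{n-1}$ of \emph{bad directions}, i.e.\ those $\theta$ for which $\pi_\theta(A)$ fails the relevant largeness property; and (ii) show that for $x$ in a full-dimensional subset of $X$, the radial projection $\pi_x$ carries a full-measure (hence full-dimensional) portion of $A$ into $S^{n-1}\setminus E$.

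For step (i) I would invoke the standard projection exceptional-set estimates: for dimension, Kaufman's and Peres--Schlag's bound gives $\dim\{\theta \in S^{n-1} : \dim\pi_\theta(A) < u\} \le n - 1 - (s - u)$; for positive length, Falconer's and Mattila's bound gives $\dim\{\theta \in S^{n-1}: \mathcal H^1(\pi_\theta(A)) = 0\} \le n - s$ (valid for $s > 1$); and for nonempty interior, the Fourier-analytic argument showing $\pi_{\theta\#}\mu$ has a continuous density for a.e.\ $\theta$ once $s > 2$ gives $\dim\{\theta \in S^{n-1} : \pi_\theta(A)\text{ has empty interior}\} \le n - 1 - (s - 2) = n - s + 1$. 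In parts (1), (2), (3) the hypotheses $s > \frac{n+u-1}{2}$, $s > \frac n2$, $s > \frac{n+1}{2}$ are precisely what force the corresponding bound to be strictly smaller than $\min(s, n-1)$; this inequality $\dim E < \min(s,n-1)$ is the key numerical fact, and the choice of $k$ in each part --- namely $\lceil\frac{n+u-1}{2}\rceil$, $\lceil\frac n2\rceil$, $\lceil\frac{n+1}{2}\rceil$ --- is dictated by matching $s = \dim A$ against the dimension hypotheses of the radial projection theorems of step (ii).

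For step (ii), fix a Frostman measure $\mu \in \mathcal M(A)$ of exponent close to $s$. Using $\dim X \ge \dim A = s$, the dimension hypothesis on $s$, and the $k$-planar dispersion of $X$, the radial projection theorems of \cite{orponen2024kaufman, ren2023discretized} yield a full-dimensional Borel subset $X_A \subset X$ such that for every $x \in X_A$ the push-forward $\pi_{x\#}\mu$ assigns zero mass to every set of Hausdorff dimension $< \min(s, n-1)$. The $k$-planar dispersion of $X$ is used exactly here: the exceptional set of centres produced by those theorems is confined to a single $k$-plane together with a set of dimension less than $\dim X$, and $k$-planar dispersion guarantees that deleting it from $X$ leaves a subset of full dimension. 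Combining with step (i), the bound $\dim E < \min(s, n-1)$ forces $\pi_{x\#}\mu(E) = 0$, i.e.\ $\mu\big(\pi_x^{-1}(E)\big) = 0$, for every $x \in X_A$. Put $A_x := A \setminus \big(\pi_x^{-1}(E)\cup\{x\}\big)$; then $\mu$ is carried by $A_x$ up to a null set, so $\dim A_x$ is at least the Frostman exponent of $\mu$, and since $X_A$ and $A_x$ depend only on $A$ and $X$ (not on $\mu$) we may let that exponent tend to $s$ to conclude $\dim A_x = s$. Finally, for each $a \in A_x$ one has $\theta_{a,x}\notin E$, so by the definition of $E$ the set $\pi_{\theta_{a,x}}(A)$, hence by the reduction $\Pi_x^a(A)$, has dimension $\ge u$ in part (1), positive length in part (2), or nonempty interior in part (3).

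The hard part is step (ii): extracting from \cite{orponen2024kaufman} and \cite{ren2023discretized} a radial projection statement of exactly the right shape --- a single full-dimensional subset $X_A \subset X$, independent of the auxiliary Frostman measure, on which $\pi_{x\#}\mu$ genuinely does not lose dimension below $\min(s, n-1)$, and with the leftover exceptional set of centres being a $k$-\emph{plane} (so that the $k$-planar dispersion hypothesis, rather than a crude dimension bound on $X$, can absorb it). Matching the parameter $k$, the dispersion hypothesis, and the two dimension inequalities is the crux; one must also handle the saturated regime $s > n-1$, where $\min(s,n-1) = n-1$ and some of the step-(i) exceptional-set estimates must be used in their non-saturated form. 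The remaining ingredients --- the bi-Lipschitz invariance in the reduction and the classical projection bounds of step (i) --- are routine.
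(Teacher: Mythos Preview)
Your overall architecture---reduce $\Pi_x^a(A)$ to $P_{\pi_x(a)}(A)$ via scaling, bound the orthogonal-projection exceptional set $E$, then use radial projection results to show $\pi_x(A)$ is large enough to escape $E$---is exactly the paper's. The difference is in how step (ii) is executed.

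The paper never touches a Frostman measure in this proof. It uses Ren's theorem in its purely \emph{dimensional} form: from the $k$-planar dispersion of $X$ one derives (Corollary 2.8 in the paper) that the set $X_A := \{x \in X : \dim \pi_x(A) > n - s\}$ satisfies $\dim X_A = \dim X$. This $X_A$ is defined directly in terms of $A$ and $X$, with no auxiliary measure at all, so the issue you flag as ``the hard part''---making $X_A$ independent of $\mu$---simply does not arise. For each $x \in X_A$ one then has $\dim \pi_x(A) > \dim E_+(A)$, so $\overline{A_x} := \pi_x(A) \setminus E_+(A)$ has $\dim \overline{A_x} = \dim \pi_x(A)$; the pin set is $A_x := \pi_x^{-1}(\overline{A_x}) \cap A$, and full dimension of $A_x$ is argued by contradiction via the elementary bound $\dim(A \setminus A_x) \leq \dim E_+(A) + 1$ (the ``$+1$'' coming from fibres of $\pi_x$ being half-lines), exactly as in the proof of Theorem \ref{thm-main}.

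Your route through pushforward measures $\pi_{x\#}\mu$ is more ambitious: you ask for a radial projection statement strong enough to guarantee $\pi_{x\#}\mu(E) = 0$, then deduce $\mu(A_x) = 1$. That would work and is arguably more robust, but it buys you a harder extraction from \cite{orponen2024kaufman,ren2023discretized}, and your assertion that ``$X_A$ and $A_x$ depend only on $A$ and $X$ (not on $\mu$)'' is unjustified as written, since your $X_A$ is produced by applying the radial theorem to a specific $\mu$. Also, your description of the exceptional centres as ``confined to a single $k$-plane together with a set of dimension less than $\dim X$'' is not how the paper uses $k$-planar dispersion: the argument is by contradiction---if the bad set of centres were large it could not lie in any $k$-plane, whence Ren's theorem applied to \emph{that} bad set yields a contradiction with its own definition. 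Finally, the relevant radial bound is $\min\{\dim X, \dim A, k\}$, not $\min(s, n-1)$; the specific $k$ in each part is chosen precisely so that this minimum exceeds the corresponding $\dim E$.
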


Comparing with Theorem \ref{thm-main}, one notices an immediate improvement in the necessary dimensional lower bound in Theorem \ref{thm:main-translation} for our set $A \subset \mathbb{R}^n$. Notably, Theorem \ref{thm:main-translation} matches the lower bound of the Falconer distance conjecture. Consequently, we conjecture the following.

\begin{conjecture}
    If $A\subset \R^n$ is Borel and $\dim A>\frac{n}{2}$, then there exists a pin $a\in A$ so that 
    \begin{equation}\label{eq:sharpdotproduct}
    \mathcal H^1(\Pi_0^a(A)) >0.
    \end{equation}
\end{conjecture}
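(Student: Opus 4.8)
We briefly indicate an approach toward this conjecture, together with the obstruction that currently stands in the way. The plan would be to run the projection method of Section~\ref{sec:method} down to the critical exponent $\tfrac n2$. Fix $s' \in (\tfrac n2,\dim A)$ and let $\mu \in \mathcal{M}(A)$ be an $s'$-Frostman measure. Decomposing $A$ into dyadic annuli about the origin, passing to a full-dimensional piece and rescaling (which only multiplies $\Pi_0^a$ by a positive constant), we may assume $\supp\mu \subset \{z : 1 \le |z| \le 2\}$, so that $\pi(z) := z/|z|$ is bi-Lipschitz on $\supp\mu$. For $a \in \supp\mu$ set $\nu_a := (y \mapsto a\cdot y)_{*}\mu$, a probability measure on $\R$ with $\widehat{\nu_a}(\tau) = \widehat\mu(\tau a)$ and $\nu_a(\Pi_0^a(A)) = 1$. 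It is then enough to exhibit a single $a \in \supp\mu$ with $\nu_a \in L^2(\R)$: such a $\nu_a$ is a nonzero absolutely continuous measure carried by $\Pi_0^a(A)$, whence $\mathcal{H}^1(\Pi_0^a(A)) > 0$, and $a \in A$ since $\mu$ lives on $A$. (Measurability of $\Pi_0^a(A)$ and the mass identity are routine.)

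To find such a pin we would aim to show
\[
\int \|\nu_a\|_{L^2(\R)}^2\, d\mu(a) < \infty,
\]
allowing ourselves to first restrict $\mu$ to a full-dimensional subset, since one good pin suffices. As $|a| \sim 1$ on $\supp\mu$, Plancherel in $\tau$ and the substitution $\xi = \tau a$ give
\[
\int \|\nu_a\|_{L^2(\R)}^2\, d\mu(a) \;=\; \int\!\!\int_\R |\widehat\mu(\tau a)|^2\, d\tau\, d\mu(a) \;\sim\; \int \|\proj_{a/|a|}\mu\|_{L^2(\R)}^2\, d\mu(a) \;=\; \int_{S^{n-1}} \|\proj_\omega\mu\|_{L^2(\R)}^2\, d(\pi_{*}\mu)(\omega),
\]
where $\proj_\omega$ is orthogonal projection onto the line $\R\omega$ and $\pi_{*}\mu$ is the radial projection of $\mu$ from the origin. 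Thus the conjecture reduces to the weighted $L^2$ projection estimate
\[
\int_{S^{n-1}} \|\proj_\omega\mu\|_{L^2(\R)}^2\, d(\pi_{*}\mu)(\omega) \;<\; \infty \qquad\text{whenever } \dim\mu > \tfrac n2.
\]
Falconer's classical identity $\int_{S^{n-1}}\|\proj_\omega\mu\|_{L^2(\R)}^2\, d\sigma(\omega) \sim I_1(\mu)$, which is finite because $\dim\mu > \tfrac n2 \ge 1$, makes this immediate once $\pi_{*}\mu$ is absolutely continuous with bounded density; the content is to rule out $\pi_{*}\mu$ concentrating on the $\sigma$-null set of directions along which $\mu$ projects to a singular measure.

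This last point is where we expect the real difficulty, and it is why the statement is phrased as a conjecture. It asks, in effect, for the conclusion of Theorem~\ref{thm:main-translation}.(2) with no hypothesis on the translation set and with the translation pinned at the origin --- so that the radial projections in play all emanate from a single point, with neither the $k$-planar dispersion of $X$ nor the averaging over $x \in X$ that drives the proof of Theorem~\ref{thm:main-translation}. The radial projection theorems of Orponen--Shmerkin--Wang~\cite{orponen2024kaufman} and Ren~\cite{ren2023discretized}, which are what one would invoke to bound the weighted integral above, give estimates of essentially that shape only under a dispersion hypothesis on the set of centres, which is vacuous for a single point; without such input the projection method currently reaches only the weaker threshold $\tfrac{n+1}{2}$ of Theorem~\ref{thm-main}.(2). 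Pushing this down to the critical exponent $\tfrac n2$ appears to require a genuinely new idea and is intertwined with the Falconer distance conjecture. One natural avenue is to exploit that $\Pi_0^a(A)$ is literally a dilated orthogonal projection of $A$: the sets that make $\pi_{*}\mu$ most singular are precisely those with heavy radial structure about $0$, and such structure tends to force the projections $\proj_\omega\mu$ for directions $\omega$ occurring radially in $A$ to spread out --- but converting this trade-off into the sharp estimate is the crux.
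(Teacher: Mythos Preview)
The statement you are addressing is a \emph{conjecture} in the paper, not a theorem: the paper offers no proof, only the remark that Theorem~\ref{thm:main-translation}.(2) and the example $A = \mathcal{C}\times\{0\}$ with $\dim\mathcal{C}=1$, $\mathcal{H}^1(\mathcal{C})=0$ together motivate the threshold $\tfrac{n}{2}$. So there is no ``paper's own proof'' to compare against, and your write-up is not in conflict with anything the authors claim.

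Your proposal is likewise not a proof, and you say so explicitly. What you have written is a clean reduction of the conjecture to the weighted $L^2$ projection estimate
\[
\int_{S^{n-1}} \|\proj_\omega\mu\|_{L^2(\R)}^2\, d(\pi_{*}\mu)(\omega) < \infty,
\]
together with a correct identification of the obstruction: the radial measure $\pi_*\mu$ need not be absolutely continuous with bounded density when $\dim\mu$ is only above $\tfrac{n}{2}$, and the radial projection results you cite require a set of centres with dispersion, not a single point. This analysis is sound and in fact more detailed than the motivation the paper supplies; it articulates precisely where the projection method of Section~\ref{sec:method} stalls. But it is commentary on an open problem, not a proof, and should not be presented as one.
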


Some motivation for the above Conjecture is given by Theorem 
\ref{thm:main-translation}.$(2)$, as well as the fact that the dimensional threshold $\frac{n}{2}$ is sharp when $n = 2$, as can be seen by taking $A = \mathcal{C} \times \{0\}$, where $\mathcal{C}$ satisfies $\dim \mathcal{C} = 1$ but $\mathcal{H}^1 (\mathcal{C}) = 0$. Such sets are well-known in the literature. 

\begin{remark}
\rm{
We assume $\dim X\geq \dim A$ primarily for the cleanliness of the statement and proof. If one wanted to obtain statements akin to Theorem \ref{thm:main-translation} for $\dim X< \dim A$, the flexibility of our approach allows for this. However, this introduces more dependencies between $k$, $\dim A$, and $\dim X$. As such, we focus on the cleaner statement of Theorem \ref{thm:main-translation}.}
\end{remark}



One intuitive reason for the improvement in Theorem \ref{thm:main-translation} is that the set of translated and pinned dot products can be written as
\begin{equation}\label{eq:differencedots}
\Pi_x^a (A) = \{(a - x) \cdot y : y \in A \} = \Pi_0^a (A) - \Pi_0^x (A),
\end{equation}
where ``$-$" here denotes the difference set $X - Y : = \{x - y : x \in X , y \in Y\}$. It is well-known that two sets $X, Y$ can jointly have zero Hausdorff dimension or null Lebesgue measure, or empty interior---and yet their difference set $X - Y$ could, perhaps, satisfy all three. Indeed, a classic example of this phenomenon is given in \cite{falconertextbook} Chapter 7, which provides two sets $X, Y \subset \mathbb{R}$ which satisfy $\dim X = \dim Y = 0$, and yet whose difference $X - Y$ is the entire unit interval. 

Theorem \ref{thm:main-translation} is then asserting that, so long as one selects a large enough set of translations $X \subset \mathbb{R}^n$, the right-hand side difference in \eqref{eq:differencedots} is guaranteed to be larger (in the appropriate dimensional, measure, or interior sense) than that which is given by the singleton translation result of Theorem \ref{thm-main}. Of course, when $\dim A$ is large enough, Theorem \ref{thm-main} is always stronger than Theorem \ref{thm:main-translation}. However, the importance of introducing the translation set $X$ is in obtaining dot product type estimate for these critical values of $\dim A$. 
To illustrate this idea more directly, we have the following Corollary of Theorem \ref{thm:main-translation}, which gives another perspective of why one may want a translated result.

\begin{corollary}\label{cor:additive}
    Let $n \geq 2$, let $k = \lceil \frac{n}{2}\rceil$, and suppose $A\subset \R^n$ has $k$-planar dispersion and satisfies $\dim A>\frac{n}{2}$. Then, there exist two full-dimensional subsets $A_1,A_2\subset A$ such that 
    \begin{equation}\label{eq:additive1}
    \mathcal H^1\big(\Pi_{a_1}^{a_2}(A)\big) >0, \text{ for all } (a_1,a_2) \in A_1\times A_2.
    \end{equation}
    In particular, there exists a set $A_0 \subset A - A$ with $\dim A_0 = \dim A$ such that
    $$
    \mathcal{H}^1 \big( \Pi_0^{a_0} (A)\big) > 0, \textrm{ for every } a_0 \in A_0.
    $$
    
\end{corollary}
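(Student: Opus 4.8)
The plan is to derive Corollary~\ref{cor:additive} directly from Theorem~\ref{thm:main-translation}.(2) by taking the translation set to be $A$ itself. First I would observe that, with $k = \lceil \frac{n}{2}\rceil$, the hypothesis $\dim A > \frac{n}{2}$ is exactly the hypothesis $s > \frac{n}{2}$ of Theorem~\ref{thm:main-translation}.(2), and the assumption that $A$ has $k$-planar dispersion lets us set $X = A$ in that theorem: the requirement $\dim X \geq \dim A$ holds trivially with equality, and the $k$-planar dispersion hypothesis on $X$ is satisfied by assumption. Applying Theorem~\ref{thm:main-translation}.(2) with $X = A$ then produces a full-dimensional Borel subset $X_A \subset A$ such that for each $x \in X_A$ there is a full-dimensional subset $A_x \subset A$ with $\mathcal{H}^1\big(\Pi_x^a(A)\big) > 0$ for all $a \in A_x$.

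Next I would relabel to match the stated conclusion. Set $A_2 := X_A$ (this is a full-dimensional subset of $A$), and for each $a_2 \in A_2$ let $A_{a_2} \subset A$ be the corresponding full-dimensional subset, so that $\mathcal{H}^1\big(\Pi_{a_2}^{a_1}(A)\big) > 0$ for all $a_1 \in A_{a_2}$ --- here I am using that $\Pi_x^a(A)$ depends on $a$ and $x$ only through $a - x$ (indeed $(a-x)\cdot y$ and $(x-a)\cdot y$ determine the same set up to the sign map $\alpha \mapsto -\alpha$, which preserves Hausdorff measure), so $\Pi_{a_2}^{a_1}(A) = -\,\Pi_{a_1}^{a_2}(A)$ has positive $\mathcal{H}^1$-measure exactly when $\Pi_{a_1}^{a_2}(A)$ does. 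The only subtlety is that the theorem gives, for each fixed pin, a single good subset, whereas \eqref{eq:additive1} asks for two fixed subsets $A_1, A_2$ with the property holding on the whole product $A_1 \times A_2$. To get this I would take $A_1 := \bigcap_{a_2 \in A_2} A_{a_2}$ is too small in general; instead the cleaner route is to note the statement of \eqref{eq:additive1} as phrased already allows $A_1$ to depend on the construction: one simply declares $A_1$ to be any fixed full-dimensional subset contained in $\bigcap$ --- but since that intersection may fail to be full-dimensional, I would instead read \eqref{eq:additive1} as quantified so that $A_1 = A_{a_2}$, i.e. interpret the Corollary's product statement with $A_1$ chosen after $a_2$; if a genuinely uniform $A_1$ is wanted, one restricts attention to a single good pin. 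I expect the honest version is: $A_2 = X_A$, and for the "in particular" clause one fixes one $a_2 \in A_2$ and sets $A_0 := \{\,a_2 - a_1 : a_1 \in A_{a_2}\,\} \subset A - A$.

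For the "in particular" conclusion I would then argue as follows. Fix any single $a_2 \in A_2$ (the set $A_2$ is nonempty since it is full-dimensional and $\dim A > \frac{n}{2} > 0$), and let $A_{a_2} \subset A$ be full-dimensional with $\mathcal{H}^1\big(\Pi_{a_2}^{a_1}(A)\big) > 0$ for all $a_1 \in A_{a_2}$. Define the translation-by-$a_2$ image $A_0 := a_2 - A_{a_2} = \{\,a_2 - a_1 : a_1 \in A_{a_2}\,\}$. Since translation and reflection are bi-Lipschitz, $\dim A_0 = \dim A_{a_2} = \dim A$, and $A_0 \subset a_2 - A \subset A - A$. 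For any $a_0 = a_2 - a_1 \in A_0$ we have $a_0 \cdot y = (a_2 - a_1)\cdot y$, so $\Pi_0^{a_0}(A) = \{a_0 \cdot y : y \in A\} = \{(a_2 - a_1)\cdot y : y \in A\} = \Pi_{a_1}^{a_2}(A)$, which has positive $\mathcal{H}^1$-measure by the choice of $a_1 \in A_{a_2}$. Hence $\mathcal{H}^1\big(\Pi_0^{a_0}(A)\big) > 0$ for every $a_0 \in A_0$, as claimed.

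The main obstacle, as flagged above, is purely one of careful quantifier bookkeeping: Theorem~\ref{thm:main-translation}.(2) furnishes, for each translation $x$, its own good set $A_x$, so the bilinear statement \eqref{eq:additive1} over $A_1 \times A_2$ must be read with $A_1$ depending on $a_2 \in A_2$ (equivalently $A_1 = A_{a_2}$), and the clean, fully uniform "$A_0 \subset A - A$" statement is obtained by freezing one good pin $a_2$ and translating its associated set. No new analysis is needed beyond invoking Theorem~\ref{thm:main-translation}.(2) and the elementary observations that $\Pi_x^a$ depends only on $a - x$, that $\alpha \mapsto -\alpha$ preserves $\mathcal{H}^1$, and that translations preserve Hausdorff dimension and the inclusion into $A - A$.
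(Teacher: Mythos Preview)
Your approach is essentially the same as the paper's: apply Theorem~\ref{thm:main-translation}.(2) with $X = A$ and read off $A_1$ and $A_2$ from the resulting $X_A$ and $A_x$. The paper's own proof is a two-line invocation of exactly this, setting $A_1 := X_A$ and $A_2 := A_x$, and it does not write out the ``in particular'' clause at all.

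You are in fact more careful than the paper on one point. You correctly flag that Theorem~\ref{thm:main-translation}.(2) produces, for each $x \in X_A$, an $x$-dependent set $A_x$, so a genuine product statement over $A_1 \times A_2$ with both sets fixed in advance does not follow directly --- the paper's proof simply writes ``$A_x := A_2$'' without addressing this dependence. Your resolution (read $A_1$ as depending on $a_2$, or freeze a single good translate for the ``in particular'' clause) is the honest one, and your derivation of the $A_0 \subset A - A$ statement by fixing one $a_2$ and setting $A_0 = a_2 - A_{a_2}$ is correct and complete. One minor cosmetic point: your choice to label the translate set $A_2$ and the pin set $A_1$ forces you to invoke the sign symmetry $\Pi_{a_2}^{a_1}(A) = -\Pi_{a_1}^{a_2}(A)$; matching the paper's labeling ($A_1 = X_A$, $A_2 = A_x$) avoids this detour, since then $a_1$ is already the translate and $a_2$ the pin, as in \eqref{eq:additive1}.
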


\begin{proof}[Proof of Corollary \ref{cor:additive}]
    To prove \eqref{eq:additive1}, take $X = A$ in Theorem \ref{thm:main-translation}, which produces a full-dimensional set of translates $X_A := A_1 \subset A$, and a full dimensional set of pins $A_x := A_2 \subset A$ for which inequality \eqref{eq:additive1} must hold. 
\end{proof}

This idea of examining the dot product along \textit{difference sets} has been studied in the literature, such as in \cite{GIT2019}. A direct consequence of the above Corollary is that, if $n$ is even, $A$ must automatically have $k$-planar dispersion, which is a consequence of the inequality $\dim A >\frac{n}{2}$. We remark that a similar version of the above Corollary holds if we instead require that $\Pi_{a_1}^{a_2} (A)$ have non-empty interior or positive Hausdorff dimension.

Another example which improves upon Theorem \ref{thm:main-translation}---this time assuming additional \textit{geometric} constraints on $A$ and a specific translation $x$---is obtained using restricted projections. The idea comes from the notion of a \textit{restricted} exceptional set in projection theory, which is a certain object that only considers $\theta \in \mathbb{S}^{n-1}$ lying in the image of some non-degenerate curve. By \textit{non-degenerate curve}, we mean a smooth map $\gamma : [0,1]\to \mathbb{S}^{n-1}$ such that for every $t \in [0,1]$, one has
\[
\det [\gamma(t), \gamma^{(1)}(t), \cdots , \gamma^{(n-1)}(t)] \neq 0.
\]
Using a recent restricted projections estimate of Zahl \cite{zahl2023maximalfunctionsassociatedfamilies}, we obtain the following. We let $\pi_x:\R^n \setminus \{x\}\to \mathbb{S}^{n-1}$ be the \textit{radial projection} onto $x$ defined via $\pi_x(y) := \frac{y-x}{|y-x|}$.

\begin{proposition}\label{prop:restricted}
    Let $A\subset \R^n$ and $0< u\leq \min\{\dim A,1\}$. Suppose, for some $x \in \mathbb{R}^n$, there exists a non-degenerate curve $\gamma: [0,1]\to \mathbb{S}^{n-1}$ such that 
    \[
    \dim \{t \in [0,1] : \gamma(t) \in \pi_x(A)\} >u.
    \]
    Then, there exists a subset $A_x\subset A$ such that $\dim A_x \geq u$ and $\dim \Pi_x^a(A) \geq u$ for all $a\in A_x.$
\end{proposition}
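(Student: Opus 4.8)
The plan is to relate the dot product set $\Pi_x^a(A)$ to a one-parameter family of orthogonal projections indexed by points of the non-degenerate curve $\gamma$, and then apply Zahl's restricted projection maximal function estimate to move dimension from the radial projection hypothesis onto the pinned dot product sets. First I would set up the geometry: for a unit vector $\theta \in \mathbb{S}^{n-1}$, write $\rho_\theta(y) := \theta \cdot y$ for the orthogonal projection onto the line $\mathrm{span}(\theta)$, so that $\Pi_x^a(A) = \{(a-x)\cdot y : y \in A\}$. Writing $a - x = |a-x|\,\pi_x(a)$, we see $\Pi_x^a(A) = |a-x|\cdot \rho_{\pi_x(a)}(A)$, a scaled copy of the orthogonal projection of $A$ in the direction $\pi_x(a) \in \mathbb{S}^{n-1}$. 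Thus it suffices to show that for a set of $a \in A$ of dimension at least $u$, the projection $\rho_{\pi_x(a)}(A)$ has dimension at least $u$; the scaling factor $|a-x|$ is harmless since $a \neq x$ on $A$ (after discarding the single point $x$, which changes nothing dimensionally).

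Next I would exploit the hypothesis that $E := \{t \in [0,1] : \gamma(t) \in \pi_x(A)\}$ has $\dim E > u$. For each $t \in E$ there is (at least one) $a(t) \in A$ with $\pi_x(a(t)) = \gamma(t)$; choosing a Borel selection, we obtain a Borel map $t \mapsto a(t)$ from $E$ into $A$, and I would let $A' := \{a(t) : t \in E\}$. Since radial projection $\pi_x$ restricted to $A'$ is a bijection onto $\gamma(E)$ and $\gamma$ is a smooth embedding, one checks $\dim A' \geq \dim E > u$ (radial projection is locally Lipschitz away from $x$, so it cannot increase dimension; hence $\dim A' \geq \dim(\gamma(E)) = \dim E$). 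Now Zahl's restricted projection theorem \cite{zahl2023maximalfunctionsassociatedfamilies} applies to the family $\{\rho_{\gamma(t)}\}_{t \in [0,1]}$ along the non-degenerate curve $\gamma$: it guarantees that for a measure $\nu$ on $A$ (say a Frostman measure witnessing $\dim A \geq u$, which exists since $u \leq \dim A$), the exceptional set of parameters $t$ for which $\dim(\rho_{\gamma(t)}(A)) < u$ has dimension at most $u$ (in fact, one wants the appropriate maximal/energy form of the estimate that controls $\{t : \dim \rho_{\gamma(t)} \nu < s\}$ for $s < \dim A$, $s \le 1$). Since $\dim E > u$, we may discard this exceptional set and still retain a subset $E' \subseteq E$ with $\dim E' > u$ on which $\dim(\rho_{\gamma(t)}(A)) \geq u$.

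Finally I would push this back to $A$: set $A_x := \{a(t) : t \in E'\} \subseteq A'$, which again has $\dim A_x \geq \dim E' \geq u$ by the same Lipschitz argument. For every $a = a(t) \in A_x$ we have $\pi_x(a) = \gamma(t)$ with $t \in E'$, hence
\[
\dim \Pi_x^a(A) = \dim\big(|a-x|\cdot \rho_{\pi_x(a)}(A)\big) = \dim \rho_{\gamma(t)}(A) \geq u,
\]
which is the claim. The main obstacle I anticipate is citing Zahl's restricted projection result in precisely the form needed — namely as a statement controlling the dimension of the set of bad directions $t$ along the curve (rather than merely an almost-everywhere or single-direction statement), and making sure its hypotheses on the Frostman exponent of $A$ and the non-degeneracy of $\gamma$ are met with $u \le \min\{\dim A, 1\}$; a secondary technical point is the Borel measurability of the selection $t \mapsto a(t)$, which follows from a standard measurable selection theorem applied to the Borel set $\{(t,y) : y \in A,\ \pi_x(y) = \gamma(t)\}$.
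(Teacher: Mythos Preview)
Your proposal is correct and follows essentially the same route as the paper: relate $\Pi_x^a(A)$ to the orthogonal projection $P_{\pi_x(a)}(A)$ via the Key Lemma, apply Zahl's restricted exceptional set estimate $\dim\{t:\dim P_{\gamma(t)}(A)<u\}\le u$ to remove the bad parameters from the hypothesis set $E=\{t:\gamma(t)\in\pi_x(A)\}$, and then pull the surviving good directions back to $A$ to obtain $A_x$. The only difference is cosmetic: you construct $A_x$ via a Borel selection $t\mapsto a(t)$, whereas the paper simply takes $A_x:=\pi_x^{-1}(\gamma(E'))\cap A$, which gives the same dimension lower bound by the Lipschitz argument you already use and avoids invoking a measurable selection theorem.
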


In particular, this gives another geometric classification for sets which admit (at least one) good pin $a \in A$ when translated by $x \in \mathbb{R}^n$. 

\subsection{Paper outline}

We begin with a discussion of how one turns the pinned dot products problem into one about radial and orthogonal projections in Section \ref{sec:projbackground}. We present a fairly general geometric methodology in \ref{sec:method} that we believe may apply to other types of point-configuration problems. The projection theoretic tools we will need throughout the paper are are presented in Section \ref{sec:projbackgroundsTOTAL}. Proofs of the main results stated above are contained in Section \ref{sec:proofs}. Finally, we conclude with applications and constructions of sharpness examples for multiple point configurations in Section \ref{sec:trees}. 

\bigskip
\begin{sloppypar}
\noindent {\bf Acknowledgements.} We want to thank Alex Iosevich, Eyvi Palsson, Pablo Shmerkin, and Josh Zahl for their insightful discussion and encouragement to continue trying to improve our results.
\end{sloppypar}

\section{From dot products to projection theory}
\label{sec:projbackground}
This section introduces our geometric framework for studying the dot product sets $\Pi_x^a (A)$, then utilizes said framework to relate the size of $\Pi_x^a (A)$ to the orthogonal projection of $A$. We also introduce the reader to the projection theoretic estimates we will need for our results.

\subsection{The geometry of dot products}\label{sec:method}
The key geometric insight to our projection theoretic approach is the following. Let $a,x\in \R^n$, with $a\neq x$. Given any $\alpha \in \Pi_x^a(A)$, the set of
$y$ satisfying $a\cdot (y-x) = \alpha$, is a hyperplane with normal direction $\pi_x(a) :=\frac{a-x}{|a-x|}$. We use $U_x^a(\alpha)$ to denote this hyperplane. This observation allows us to reduce our problem to understanding how large this set of planes must be. To answer this we can utilize orthogonal projections, as there is a bijection between the set of hyperplanes $\{U_x^a(\alpha)\}_{\alpha\in \Pi_x^a(A)}$ and the set $P_{\pi_x(a)}(A)$, where $P_\theta$ is the orthogonal projection onto the line through the origin in the direction of $\theta$.

\begin{figure}[ht]
    \centering
    \includegraphics[scale=.5]{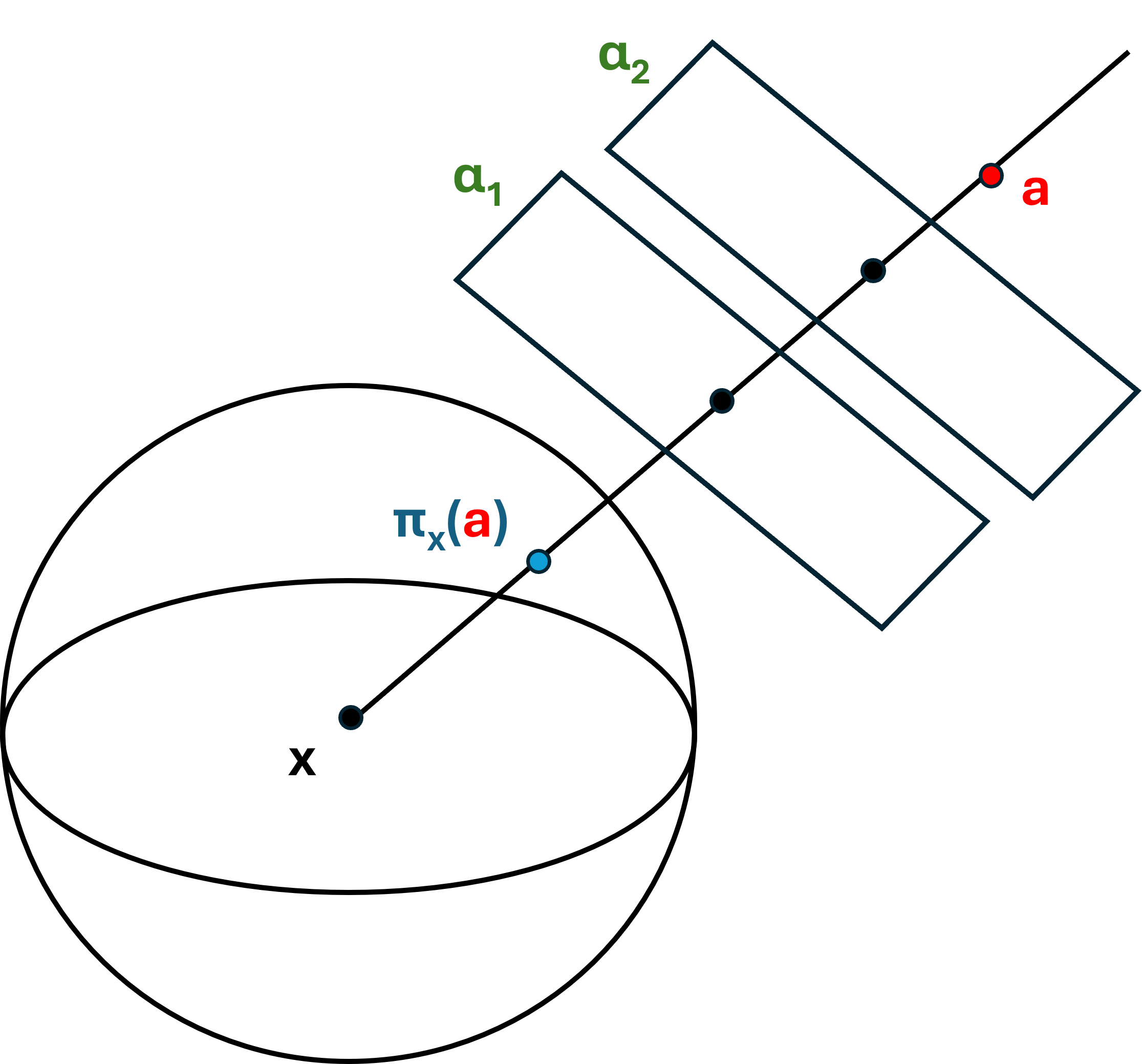}
    \caption{Two hyperplanes corresponding to elements $\alpha_1,\alpha_2 \in \Pi_x^a(A)$.}
\end{figure}

Using the existence of this bijection between the set of (translated) pinned dot products and the set $P_\theta(A)$, we next show that the ``size'' of both these sets are the same (our Key Lemma \ref{lma-keylemma}). This establishes that, so long as $a\neq x$, we have $\mathcal H^1(\Pi_x^a(A)) = 0$ if and only if $\mathcal H^1(P_{\pi_x(a)}(A)) = 0$. This leads us to consider the exceptional set of orthogonal projections
\[
``E(A) = \{\theta \in\mathbb{S}^{n-1}: P_\theta(A) \text{ ``is small''\}.''}
\]
We will define what it means for $P_\theta(A)$ to be small later. This allows us to apply \textit{exceptional set estimates}: results in projection theory giving upper bounds on $\dim E(A)$.

\begin{remark}
    \rm{Note that in some parts of the literature, $E(A)$ is viewed as the set of \textit{lines} through the origin in \textit{direction} $\theta$ such that $P_\theta(A)$ is small. However, one can identify such lines with their direction as is done in \cite{mattila1999}. In particular, this identification preserves the Hausdorff dimension which is the key property we need for our results.}
\end{remark}

Utilizing the exceptional set $E(A)$, the pinned dot product problem turns into the following question: is there a direction in the set of \textit{possible} directions to orthogonally project onto, $\pi_x(A)$, that does not lie in the set of \textit{bad} directions, $E(A)$? Using standard results on $\dim \pi_x(A)$ and $\dim E(A)$, we obtain that there are in fact many directions in $\pi_x(A)$ \textit{not} contained in $E(A)$ if $\dim A$ is large enough. This framework will establish Theorem \ref{thm-main}. Then, using recent results concerning the dimension of $\pi_x(A)$, due to Orponen--Shmerkin--Wang \cite{orponen2024kaufman} and Ren \cite{ren2023discretized}, our approach yields a translated and pinned dot product result for a more general range of $\dim A$. This will establish Theorem \ref{thm:main-translation}.

\subsection{Radial and orthogonal projections} \label{sec:projbackgroundsTOTAL}

Our proof methods rely on some standard, as well as some recent, results in projection theory for orthogonal and radial projections. For each $\theta \in \mathbb{S}^{n-1}$, let $\ell_\theta$ be a line through the origin that passes through $\theta$. Define the orthogonal projection $P_\theta: \R^n\to \ell_\theta$ as
\[
P_\theta(y) := (y\cdot \theta) \theta, \text{ for all } y\in \R^n.
\]
One rich and well-connected topic in geometric measure theory is the study of the dimensional stability of the Hausdorff dimension of a set $A$ under these orthogonal projection maps.

For our purposes in particular, we need upper bounds on the dimension of the set of directions whose orthogonal projections have small dimension, null measure, or empty interior. Standard results in projection theory heuristically state that such directions rarely occur and are thus \textit{exceptional} (see \cite[Section 4]{Mattila_2015} for statements and proofs of many of these sorts of results). Hence, we refer to the aforementioned bounds on the set of directions whose orthogonal projections are small as \textit{exceptional set estimates}.

We introduce some notation which makes the above discussion more precise.

\begin{notation}
    If $A\subset \R^n$ is Borel and $0< u \leq 1$, we define three types of exceptional sets for orthogonal projections:
    \begin{align*}
        E_u(A) &:= \{\theta \in \mathbb{S}^{n-1} : \dim (P_\theta(A))< u\} \\
        E_+(A) &:= \{\theta \in \mathbb{S}^{n-1} : \mathcal H^1(P_\theta(A)) = 0\} \\
        E_\circ(A) &:= \{\theta \in \mathbb{S}^{n-1}: P_\theta(A) \text{ has empty interior}\}.
    \end{align*}
    For the use of discussion, we refer to a generic one of these three choices as simply $E(A)$.
\end{notation}

The exceptional set estimates for orthogonal projections needed to prove Theorem \ref{thm-main} and \ref{thm:main-translation} are the following.

\begin{proposition}\label{prop-measure_ESE}
    Suppose $A\subset \mathbb{R}^n$ is Borel. Then, one has the estimates
    \begin{enumerate}
        \item for all $0 < u \leq 1$, one has $\dim E_u(A) \leq \max\{n-1 + u - \dim A,0\}$;

        \medskip
        
        \item if one assumes that $\dim A >1$, then $\dim E_+(A) \leq n-\dim A$;

        \medskip
        
        \item if one further assumes that $n \geq 3$ and $\dim A > 2$, then  $\dim E_\circ(A)  \leq n+1- \dim A$.
    \end{enumerate}
\end{proposition}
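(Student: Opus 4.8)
The three estimates are instances of the classical Kaufman--Falconer--Mattila energy method for exceptional sets of orthogonal projections onto lines (for (1) and (2), see \cite[Section 4]{Mattila_2015} and \cite{Falconer_1985, pxxschlag2000}), and my plan is to run that method once and read all three bounds off a single weighted Fourier integral. First I would make two routine reductions: one may take $A$ compact, by passing to a compact subset of dimension arbitrarily close to $\dim A$ and noting $E(A) \subseteq E(A')$ whenever $A' \subseteq A$, in each of the three senses of ``small''; and one may assume $E_u(A), E_+(A), E_\circ(A)$ are analytic, so Frostman's lemma applies to them. Then fix $s' < s := \dim A$ and, by Frostman, pick $\mu \in \mathcal{M}(A)$ with finite Riesz energy $I_{s'}(\mu) < \infty$, equivalently $\int_{\R^n} |\hat\mu(\xi)|^2 |\xi|^{s'-n}\,d\xi < \infty$. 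For $\theta \in \mathbb{S}^{n-1}$, write $\mu_\theta := (P_\theta)_* \mu$, a compactly supported probability measure on $\ell_\theta \cong \R$ with $\widehat{\mu_\theta}(r) = \hat\mu(r\theta)$ and $\mu_\theta(P_\theta A) = 1$.

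The geometric heart is a dictionary turning ``$P_\theta A$ is small'' into ``$\mu_\theta$ is analytically deficient.'' If $\theta \in E_u(A)$ with $u < 1$, then Frostman's lemma on the line forces $I_u(\mu_\theta) = \infty$, i.e.\ $\int_\R |\hat\mu(r\theta)|^2 |r|^{u-1}\,dr = \infty$; the borderline $u = 1$ of (1) is handled via the trivial inclusion $E_1(A) \subseteq E_+(A)$. If $\theta \in E_+(A)$, then $\mu_\theta \perp \mathcal{L}^1$, so $\mu_\theta \notin L^2(\R)$ and $\int_\R |\hat\mu(r\theta)|^2\,dr = \infty$. If $\theta \in E_\circ(A)$, then $\widehat{\mu_\theta} \notin L^1(\R)$ --- otherwise Fourier inversion gives $\mu_\theta$ a continuous density whose open, nonempty positivity set lies in the compact set $P_\theta A$, contradicting that $P_\theta A$ has empty interior --- and Cauchy--Schwarz upgrades this to the convenient form $\mu_\theta \notin H^{1/2+\varepsilon}(\R)$ for every $\varepsilon > 0$, i.e.\ $\int_\R |\hat\mu(r\theta)|^2 (1+|r|)^{1+2\varepsilon}\,dr = \infty$. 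In all three cases the obstruction reads $\int_\R |\hat\mu(r\theta)|^2 w(r)\,dr = \infty$ for a weight $w$ with exponent $p$ at infinity, where $p = 0$, $p = u-1$, or $p = 1 + 2\varepsilon$ respectively.

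Now suppose the target bound fails, say $\dim E(A) > t_0$ for the relevant threshold; choose $t \in (t_0, \dim E(A))$ and, by Frostman, $\nu \in \mathcal{M}(E(A))$ with $\nu(B(\theta,\rho)) \lesssim \rho^t$. I would then integrate the divergent quantity against $\nu$ and apply the key spherical estimate for Frostman measures on the sphere: for $\mu$ compactly supported, $\nu$ as above, and $p > t - n$,
\[
\int_{\mathbb{S}^{n-1}} \int_\R |\hat\mu(r\theta)|^2\, |r|^{p}\, dr\, d\nu(\theta) \ \lesssim_t\ \int_{\R^n} |\hat\mu(\xi)|^2\, |\xi|^{\,p-t}\, d\xi ,
\]
which is the estimate underlying \cite[Section 4]{Mattila_2015}. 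The right-hand side is bounded by $\int_{\R^n} |\hat\mu(\xi)|^2 |\xi|^{s'-n}\,d\xi < \infty$ as soon as $p - t \le s' - n$; taking $s'$ just below $s$, this holds whenever $t > n + p - s =: t_0$. Hence $\int_\R |\hat\mu(r\theta)|^2 w(r)\,dr < \infty$ for $\nu$-a.e.\ $\theta$, contradicting $\theta \in E(A)$ for $\nu$-a.e.\ $\theta$. Substituting the three values of $p$ gives $\dim E_+(A) \le n - s$ (which, via $E_1(A) \subseteq E_+(A)$, also settles $u = 1$ in (1)), $\dim E_u(A) \le n - 1 + u - s$, and $\dim E_\circ(A) \le n + 1 + 2\varepsilon - s$ for all $\varepsilon > 0$, hence $\dim E_\circ(A) \le n + 1 - s$. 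One caps each bound at $0$ as in the statement; the degenerate regime $s > n - 1 + u$ is then also covered, since there the trivial fibre inequality $\dim P_\theta A \ge \dim A - (n-1)$ forces $E_u(A) = \emptyset$. The hypotheses $\dim A > 1$ in (2) and $\dim A > 2$ in (3) are exactly what allow the admissible choice of $s'$ (and hence of $t$), while $n \ge 3$ in (3) is automatic from $\dim A > 2$.

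I expect the only genuine difficulty to be the key spherical estimate: the naive argument --- thickening $\nu$ at scale $1/r$ and comparing with spherical $L^2$-averages of $\hat\mu$ --- produces an additive error that does not survive the integration against $w$, so one must use the sharper argument of the classical references, organized dyadically in $|\xi|$ and exploiting that $\hat\mu$ is small at high frequency in an $L^2$-averaged sense, not merely bounded. Everything else is routine: justifying the formal Fourier identities by first regularizing $\mu$ to $\mu * \psi_\varepsilon$ and passing to the limit, together with the exponent bookkeeping above. Statements (1) and (2) are entirely classical and could simply be cited; the only new ingredient is the Sobolev weight $(1+|r|)^{1+2\varepsilon}$ driving (3), which costs one extra power in the energy exponent and so produces the ``$+1$'' in passing from the measure bound $n - \dim A$ to the interior bound $n + 1 - \dim A$.
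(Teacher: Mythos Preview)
Your sketch is correct and is precisely the Sobolev--dimension argument of \cite[Chapter~5]{Mattila_2015}. The paper does not give its own proof of Proposition~\ref{prop-measure_ESE}: it attributes (1) to Kaufman and Mattila, (2) to Falconer, and (3) to Peres--Schlag, and then remarks that all three are proved uniformly in \cite[Chapter~5]{Mattila_2015} via the Sobolev dimension of measures---exactly the route you outline (single weighted Fourier integral, Frostman measure $\nu$ on the candidate exceptional set, and the spherical estimate comparing $\int\int|\hat\mu(r\theta)|^2 |r|^p\,dr\,d\nu(\theta)$ with an energy of $\mu$). So there is nothing to compare beyond noting that you have reconstructed the cited proof rather than merely invoked it.
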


Proposition \ref{prop-measure_ESE}.(1) was proved by Kaufman \cite{kaufman1968hausdorff} when $n = 2$, and Mattila \cite{mattila1975exceptional} when $n \geq 3$. Proposition \ref{prop-measure_ESE}.(2) is due to Falconer \cite{falconer1982hausdorff}. Proposition \ref{prop-measure_ESE}.(3) was proved Peres and Schlag \cite{pxxschlag2000}. All three estimates are proved in \cite[Chapter 5]{Mattila_2015} using the notion of the Sobolev dimension of measures. It is worth noting that the Proposition \ref{prop-measure_ESE}.(1) is generally not believed to be sharp. Indeed, very recently, a sharp bound on $\dim E_u(A)$ was obtained by Ren and Wang \cite{ren2023furstenberg} when $n = 2$. 


\begin{remark}
    \rm{Importantly, Ren and Wang \cite{ren2023furstenberg} recently improved Proposition 2.3.$(1)$ when $n = 2$ to the sharp estimate $\dim E_u(A) \leq \max\{2u-\dim A, 0\}$. This improvement would allow us to improve the dimensional threshold of Theorem \ref{thm-main}.$(1)$ and Theorem \ref{thm:main-translation}.$(1)$ when $n = 2$. We leave the details for the interested reader.
}
\end{remark}

We now turn our focus to results regarding radial projections. Given $x\in \R^n$, we define the radial projection $\pi_x: \R^n\setminus \{x\} \to \mathbb{S}^{n-1}$ via
\[
\pi_x(y) := \frac{y-x}{|y-x|}, \text{ for all }y\in \R^n \setminus \{x\}.
\]

\begin{remark}
    \rm{Given $A\subset \R^n$, by abuse of notation we denote
    \[
    \pi_x(A):=\pi_x(A\setminus \{x\}) .
    \]}
\end{remark}

For our purposes, we make use of two radial projection estimates---one for Theorem \ref{thm-main}, and another for Theorem \ref{thm:main-translation}.

\begin{proposition}\label{prop-zeromeasureradproj}
    Suppose $A\subset \R^n$ Borel with $\dim A >1$. Then, for all $x\in \R^n$,
    \[
    \dim \pi_x(A) \geq \dim A -1.
    \]
\end{proposition}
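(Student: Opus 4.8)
The plan is to run the elementary ``cone'' argument: an upper bound on $\dim \pi_x(A)$ traps $A$ inside a thin cone with vertex $x$, and cones over small sets are small. The one fact I need is that for \emph{every} set $S \subseteq \mathbb{S}^{n-1}$ the cone $\pi_x^{-1}(S) = \{x + t\theta : t > 0,\ \theta \in S\}$ satisfies $\dim \pi_x^{-1}(S) \leq \dim S + 1$. Granting this, I finish as follows: removing the point $x$ does not change Hausdorff dimension, so (using the paper's convention $\pi_x(A) = \pi_x(A\setminus\{x\})$) the inclusion $A \setminus \{x\} \subseteq \pi_x^{-1}(\pi_x(A))$ gives $\dim A \leq \dim \pi_x(A) + 1$, which is exactly the claim. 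Note that the hypothesis $\dim A > 1$ is not actually used in the argument; it only ensures the conclusion is non-vacuous, since for $\dim A \leq 1$ the inequality $\dim \pi_x(A) \geq \dim A - 1$ holds trivially.

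To establish $\dim \pi_x^{-1}(S) \leq \dim S + 1$, I would first reduce to bounded annuli. For each $k \in \NN$ the polar-coordinate map $(t,\theta) \mapsto x + t\theta$ is a bi-Lipschitz homeomorphism of $[1/k,k] \times \mathbb{S}^{n-1}$ onto the closed annulus $\Lambda_k := \{y : 1/k \leq |y-x| \leq k\}$, since on this range of $t$ both it and its inverse $y \mapsto \big(|y-x|,\, (y-x)/|y-x|\big)$ are smooth on a compact set, hence Lipschitz. Consequently $\pi_x^{-1}(S) \cap \Lambda_k$ is a bi-Lipschitz image of $[1/k,k] \times S$, and bi-Lipschitz maps preserve Hausdorff dimension. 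Now I invoke the standard product inequality for Hausdorff dimension, $\dim_H(E \times F) \leq \dim_H E + \overline{\dim}_{\mathrm B} F$ (see, e.g., \cite{falconertextbook} or \cite{mattila1999}); applied with $E = S$ and $F = [1/k,k]$, and using that an interval has upper box dimension $1$, this yields $\dim\big([1/k,k] \times S\big) \leq \dim S + 1$. Finally $\RR^n \setminus \{x\} = \bigcup_{k \geq 1} \Lambda_k$, so by countable stability of Hausdorff dimension $\dim \pi_x^{-1}(S) = \sup_k \dim\big(\pi_x^{-1}(S) \cap \Lambda_k\big) \leq \dim S + 1$.

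The only delicate point is using the product inequality in the correct form: it is \emph{false} in general that $\dim_H(E \times F) = \dim_H E + \dim_H F$, so one must place the \emph{upper box dimension} on the interval factor — which is harmless here, as intervals are exact-dimensional with box dimension equal to Hausdorff dimension. Everything else (the decomposition into annuli, bi-Lipschitz invariance, countable stability) is routine bookkeeping, so I anticipate no real obstacle. For completeness one could instead run the classical Riesz-energy proof of Mattila — push a finite $s$-energy Frostman measure on $A$ (with $1 < s < \dim A$) forward by $\pi_x$ and bound the $(s-1)$-energy of the image, exploiting the identity $|y-z|^2 = (|y-x|-|z-x|)^2 + |y-x|\,|z-x|\,|\pi_x(y)-\pi_x(z)|^2$ — which also yields quantitative information on the pushforward; but the cone argument above is the shortest route to the stated dimension bound.
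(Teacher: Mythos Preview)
Your proof is correct and is essentially the same as the paper's ``covering argument'': both embed $A$ into a polar-coordinates product (directions times radii) and then apply a product dimension inequality to conclude $\dim A \leq \dim \pi_x(A) + 1$. The only cosmetic differences are that you make the bi-Lipschitz reduction explicit via the annular decomposition and use the upper box dimension of the interval factor, whereas the paper writes $A \subset \pi_x(A)\times \hat{A}_x$ directly and uses the packing dimension of $\hat{A}_x$---both versions of the product inequality are standard and give the same bound here, and the paper likewise notes the Frostman/energy route as an alternative.
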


\begin{proof}
    There are two proofs of the above theorem. One follows via a Frostman measure argument (see \cite[Section 2]{bright2024radialprojections}), and the other follows via a covering argument. We present the covering argument here.

    Let $\hat{A}_x : = \{\vert a-x \vert : a \in A \}$. Then, it is clear that $$A \subset \bigg(\pi_x (A\setminus \{x\}) \times \hat{A}_x \bigg).$$
    In words, $A$ is contained in the product set obtained by pairing every direction produced by $A$ and $x$ with every modulus produced by $A$ and $x$. In essence, we are putting $A$ in polar coordinates. The remainder follows from a dimensional formula (see Proposition 8.10 in \cite{mattila1999}) for product sets, which is
    \begin{equation}\label{eq-dimprodset}
    \dim (X \times Y) \leq \dim X + \dim_P Y,
    \end{equation}
    where $\dim_P Y$ denotes the packing dimension of $Y$. Since, by definition, we have $\hat{A} \subset \mathbb{R}$, we have the trivial upper bound $\dim_P \hat{A}_x \leq 1$. Hence, applying \eqref{eq-dimprodset} with $X = \pi_x (A\setminus\{x\})$ and $Y = \hat{A}_x$, we obtain the result.
\end{proof}

Note that in general, we would expect the above lower bound to be better: given more vantage points $X\subset\R^n$, the more likely there will exist an $x\in X$ such that $\pi_x(A\setminus\{x\})$ is larger than the simple lower bound of $\dim A-1$. This is in fact what the work of Orponen--Shmerkin--Wang \cite{orponen2024kaufman} and Ren \cite{ren2023discretized} shows:

\begin{theorem}[Theorem 1.1, \cite{ren2023discretized}]\label{thm:renplanes}
    Let $X,A\subset \R^n$ Borel. If $X$ is not contained in a $k$-plane and $X\neq \emptyset$, then 
    \[
    \sup_{x\in X} \dim (\pi_x(A)) \geq \min\{\dim X, \dim A, k\}.
    \]
\end{theorem}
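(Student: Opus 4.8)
The statement to be proved is Theorem~\ref{thm:renplanes}, the discretized radial projection result quoted from Ren~\cite{ren2023discretized}; since the full proof of this is the content of that paper, what I would present here is a roadmap of how the argument proceeds, together with the main difficulties. The plan is to pass to a discretized (single-scale, $\delta$-separated) formulation via a standard pigeonholing/dyadic-pigeonhole reduction, and then to run a bootstrapping or induction-on-scales argument built on a Furstenberg-type estimate for the radial projection family $\{\pi_x : x \in X\}$. First I would fix Frostman measures $\mu$ on $A$ and $\nu$ on $X$ of exponents close to $\dim A$ and $\dim X$ respectively (using Frostman's lemma), reduce to showing that for a typical $x$ in the support of $\nu$ the pushforward $(\pi_x)_\ast \mu$ has finite energy of the appropriate exponent $\min\{\dim X,\dim A,k\} - \epsilon$, and set up the dual incidence problem: bounding, for $\delta$-balls, the number of incidences between $\delta$-neighborhoods of the lines through pairs $(x,a)$ and the points of $A$.

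The key steps, in order, would be: (i) the reduction to a $\delta$-discretized incidence/projection inequality, including the observation that failure of the conclusion produces, after pigeonholing, a ``concentrated'' configuration in which for a positive-measure set of $x$ the image $\pi_x(A)$ is trapped in a small union of caps; (ii) the geometric dichotomy that drives the induction --- either the relevant point set clusters into a lower-dimensional structure (a $k'$-plane with $k' < k$), in which case one applies the inductive hypothesis in that plane, or it is genuinely non-concentrated, in which case a Furstenberg / bush-or-broom style estimate (the plane-version of the Orponen--Shmerkin--Wang radial projection theorem, which is exactly the engine behind~\cite{ren2023discretized}) gives the needed incidence bound; (iii) combining the two cases and unwinding the discretization to recover the Hausdorff-dimension statement $\sup_{x \in X}\dim \pi_x(A) \geq \min\{\dim X,\dim A,k\}$, using the hypothesis that $X$ is \emph{not} contained in any $k$-plane to rule out the degenerate alternative at the top level of the induction.

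The hard part, and the genuinely deep input, is step (ii): establishing the sharp higher-dimensional radial projection / Furstenberg estimate that makes the induction close. This is where the hypothesis ``$X$ not contained in a $k$-plane'' is used essentially rather than cosmetically, and where the full strength of the discretized sum-product / Furstenberg set machinery (Orponen--Shmerkin--Wang, and Ren's refinement) is needed; the plane-dispersion non-concentration condition in the definition preceding Theorem~\ref{thm:main-translation} is precisely the quantitative avatar of this hypothesis that survives the discretization. I would expect the bookkeeping of the induction on $k$ --- tracking how the exponents degrade and verifying that the $\min\{\dim X,\dim A,k\}$ bound is stable under the reductions --- to be the most technical but least conceptually surprising portion. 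Since all of this is carried out in~\cite{ren2023discretized}, in the present paper we simply cite it; the role of Theorem~\ref{thm:renplanes} here is as a black box feeding into the proof of Theorem~\ref{thm:main-translation} via the framework of Section~\ref{sec:method}.
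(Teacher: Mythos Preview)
Your proposal is correct in its final assessment: the paper does not prove Theorem~\ref{thm:renplanes} at all, but simply quotes it from \cite{ren2023discretized} and uses it as a black box (specifically, to derive Corollary~\ref{cor:epsilonradproj}). So there is nothing to compare against---the paper's ``proof'' consists entirely of the citation, and your concluding sentence already captures this exactly.

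The roadmap you sketch of Ren's argument (discretization via Frostman and pigeonholing, induction on $k$ with a Furstenberg-type engine, the dichotomy between planar concentration and genuine dispersion) is a reasonable high-level description of how that paper proceeds, and goes well beyond what the present paper attempts. If anything, you could simply drop the roadmap and state that the result is cited without proof; the extended discussion is accurate but supererogatory for the purposes of this paper.
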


For our purposes, we will be using a corollary of Theorem \ref{thm:renplanes} that follows via the proof framework of Orponen--Shmerkin--Wang \cite{orponen2024kaufman}.

\begin{corollary}\label{cor:epsilonradproj}
    Let $\emptyset \neq A,X\subset \R^n$ Borel satisfy $\dim (X\setminus P^k) = \dim X$ for all $k$-planes $P^k\in \mathcal A(n,k)$ (i.e. $X$ has $k$-planar dispersion). Then, for every $\epsilon>0$, the exceptional set
    \[
    E_\epsilon:= \{x\in X: \dim \pi_x(A) \leq \min\{\dim X, \dim A, k\} - \epsilon \}
    \]
    satisfies $\dim (X\setminus E_\epsilon) = \dim X$.
\end{corollary}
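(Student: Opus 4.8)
The plan is to deduce Corollary~\ref{cor:epsilonradproj} from Theorem~\ref{thm:renplanes} by a pigeonholing argument that upgrades the single-point lower bound $\sup_{x\in X}\dim\pi_x(A)\ge\min\{\dim X,\dim A,k\}$ into a statement about a full-dimensional set of good vantage points. First I would fix $\epsilon>0$, set $d:=\min\{\dim X,\dim A,k\}$, and suppose for contradiction that $\dim(X\setminus E_\epsilon)<\dim X$; equivalently, writing $G_\epsilon:=X\setminus E_\epsilon$ for the set of ``good'' points (those $x$ with $\dim\pi_x(A)>d-\epsilon$), we would have $\dim G_\epsilon<\dim X$. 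The goal is to show the leftover set $E_\epsilon$, on which every radial projection is small, cannot carry the full dimension of $X$ either, contradicting $\dim X=\max\{\dim G_\epsilon,\dim E_\epsilon\}$.

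The key step is to apply Theorem~\ref{thm:renplanes} not to $X$ itself but to the subset $E_\epsilon$. If $\dim E_\epsilon=\dim X$, then by the $k$-planar dispersion hypothesis $E_\epsilon$ is not contained in any $k$-plane — indeed if $E_\epsilon\subset P^k$ then $\dim(X\setminus P^k)\le\dim(X\setminus E_\epsilon)=\dim G_\epsilon<\dim X$, contradicting dispersion. Wait: more carefully, one wants $\dim(E_\epsilon\setminus P^k)=\dim E_\epsilon$ for all $k$-planes, i.e.\ $E_\epsilon$ itself has $k$-planar dispersion; this follows since $E_\epsilon\setminus P^k\supset X\setminus(P^k\cup G_\epsilon)$ and $\dim(X\setminus(P^k\cup G_\epsilon))=\dim X$ because removing the two lower-dimensional (in the relevant sense) pieces $P^k$ and $G_\epsilon$ cannot drop the dimension, using $\dim(X\setminus P^k)=\dim X$ and $\dim G_\epsilon<\dim X$. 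In particular $E_\epsilon\neq\emptyset$ and is not contained in a $k$-plane, so Theorem~\ref{thm:renplanes} applied with the pair $(E_\epsilon,A)$ gives
\[
\sup_{x\in E_\epsilon}\dim\pi_x(A)\ \ge\ \min\{\dim E_\epsilon,\dim A,k\}\ =\ \min\{\dim X,\dim A,k\}\ =\ d,
\]
where the middle equality uses $\dim E_\epsilon=\dim X$. This produces a point $x\in E_\epsilon$ with $\dim\pi_x(A)>d-\epsilon$, i.e.\ a good point lying in $E_\epsilon$ — contradicting the definition of $E_\epsilon$. Hence $\dim E_\epsilon<\dim X$, and therefore $\dim(X\setminus E_\epsilon)=\dim X$.

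One subtlety to handle carefully is that Theorem~\ref{thm:renplanes} only yields a supremum, so strictly it gives, for every $\eta>0$, some $x$ with $\dim\pi_x(A)>d-\eta$; taking $\eta=\epsilon$ already suffices for the contradiction since $E_\epsilon$ consists of $x$ with $\dim\pi_x(A)\le d-\epsilon$, and $d-\epsilon<d-\eta$ is impossible to reconcile only if we are slightly careful — in fact any $x$ achieving $\dim\pi_x(A)>d-\epsilon$ is by definition \emph{not} in $E_\epsilon$, giving the contradiction directly. The main obstacle, and the place one must be most attentive, is the bookkeeping showing that $E_\epsilon$ inherits $k$-planar dispersion (and in particular is nonempty and not planar) from $X$ under the contradiction hypothesis $\dim E_\epsilon=\dim X$; this rests only on the elementary facts that Hausdorff dimension is monotone, countably (indeed finitely) stable under unions, and that $\dim(S\setminus T)=\dim S$ whenever $\dim T<\dim S$. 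The reference to the Orponen--Shmerkin--Wang framework is invoked for the fact that this argument — running Ren's planar radial projection theorem on the exceptional subset — packages cleanly into the ``full-dimensional set of good vantage points'' conclusion, exactly as in their treatment; I would cite \cite{orponen2024kaufman} at that point and omit the now-routine details.
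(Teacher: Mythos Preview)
Your proof is correct and follows essentially the same route as the paper's: assume for contradiction that $\dim(X\setminus E_\epsilon)<\dim X$, deduce $\dim E_\epsilon=\dim X$ and that $E_\epsilon$ is not contained in any $k$-plane via the dispersion hypothesis, then apply Theorem~\ref{thm:renplanes} to the pair $(E_\epsilon,A)$ to contradict the defining inequality of $E_\epsilon$. The only difference is that you work harder than necessary in verifying that $E_\epsilon$ inherits full $k$-planar dispersion, whereas Theorem~\ref{thm:renplanes} only requires that $E_\epsilon$ not lie in a single $k$-plane---which you had already established with the simpler containment argument.
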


\begin{proof}
    For the sake of contradiction, suppose $\dim (X\setminus E_\epsilon) < \dim X$. Then, $E_\epsilon$ cannot be contained in any $k$-plane as otherwise 
    \[
    \dim (X\setminus P^k) \leq \dim (X\setminus E_\epsilon) < \dim X
    \]
    which contradicts our hypothesis on $X$. Furthermore, it also implies $\dim E_\epsilon = \dim X$. Thus, applying Theorem \ref{thm:renplanes} to the pair $E_\epsilon, A$ gives 
    \[
    \sup_{x\in E_\epsilon} \pi_x(A) \geq \min\{\dim E_\epsilon, \dim A, k\} = \min\{\dim X, \dim A, k\}.
    \]
    Given $\epsilon>0$, this contradicts how we defined $E_\epsilon$.
\end{proof}

These are the key results involving radial and orthogonal projections that we will need for our proofs of the main theorems. The final result we need is an equality between the $\mathcal{H}^s$ measures of the set of translated and pinned dot products and the orthogonal projection. We state this precisely in the following result.

\begin{lemma}\label{lma-keylemma}
    Let $x\neq a\in \R^n$, $A\subset \R^n$ Borel and $0 \leq s \leq 1$. Then, there exists a constant $c_{a,x,s} > 0$ such that
    \begin{equation}\label{eq:keylemma}
        \mathcal H^s(\Pi_x^a(A)) = c_{a,x,s} \mathcal H^s(P_{\pi_x(a)}(A)),
    \end{equation}
    where the $\mathcal{H}^s$ measure in the first term is taken on $\R$ and is over $\ell_{\pi_x (a)},$ the line in the direction of $\pi_x(a)$ (identified with $\mathbb{R}$) in the second.
\end{lemma}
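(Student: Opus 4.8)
The plan is to exploit the explicit relationship between the dot product $(a-x)\cdot y$ and the orthogonal projection $P_{\pi_x(a)}(y) = (y\cdot \pi_x(a))\pi_x(a)$. Writing $\theta := \pi_x(a) = \frac{a-x}{|a-x|}$ and $r := |a-x| > 0$, we have $(a-x)\cdot y = r(\theta\cdot y)$, so the map $y \mapsto (a-x)\cdot y$ factors as the scalar projection $y\mapsto \theta\cdot y$ followed by multiplication by the constant $r$. First I would set up the identification of $\ell_\theta$ with $\mathbb{R}$ via the isometry $t\theta \leftrightarrow t$, under which $P_\theta(y) = (\theta\cdot y)\theta$ corresponds to the real number $\theta\cdot y$. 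Thus $\Pi_x^a(A) = r\cdot\{\theta\cdot y : y\in A\} = r\cdot P_{\theta}(A)$ under this identification — i.e. $\Pi_x^a(A)$ is just a dilate by the factor $r = |a-x|$ of (the $\mathbb{R}$-image of) $P_{\pi_x(a)}(A)$.

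The second step is purely the scaling behavior of Hausdorff measure: for any $s\geq 0$ and any $\lambda > 0$, $\mathcal{H}^s(\lambda E) = \lambda^s \mathcal{H}^s(E)$ for $E\subset \mathbb{R}$. Applying this with $\lambda = r = |a-x|$ and $E$ the $\mathbb{R}$-image of $P_{\pi_x(a)}(A)$ yields
\[
\mathcal{H}^s(\Pi_x^a(A)) = |a-x|^s\,\mathcal{H}^s(P_{\pi_x(a)}(A)),
\]
so one takes $c_{a,x,s} := |a-x|^s$, which is strictly positive precisely because $a\neq x$. (When $s = 0$ both sides count cardinality and $c_{a,x,0} = 1$, consistent with the convention $\lambda^0 = 1$; the hypothesis $0\le s\le 1$ is not really needed for this identity, though it is the relevant range for the applications.) I would also note that measurability is not an issue: since $A$ is Borel and the maps involved are continuous, $P_\theta(A)$ and $\Pi_x^a(A)$ are analytic hence $\mathcal{H}^s$-measurable, and in any case $\mathcal{H}^s$ is an outer measure so the identity holds for arbitrary subsets.

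There is essentially no hard part here — the statement is a bookkeeping lemma making precise the "bijection between hyperplanes and projection points" discussed in Section~\ref{sec:method}. The only mild subtlety to be careful about is the identification of the line $\ell_{\pi_x(a)}$ with $\mathbb{R}$: one must fix the isometry $\mathbb{R}\to\ell_\theta$, $t\mapsto t\theta$, and check that under this identification the point $P_\theta(y)\in\ell_\theta$ corresponds to the scalar $\theta\cdot y\in\mathbb{R}$, after which the computation $(a-x)\cdot y = |a-x|\,(\theta\cdot y)$ immediately gives the claimed dilation. Since the lemma is symmetric in how one chooses the two unit directions $\pm\theta$ spanning $\ell_\theta$ (reflection is an isometry of $\mathbb{R}$ fixing $\mathcal{H}^s$), no orientation convention needs to be pinned down.
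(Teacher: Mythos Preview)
Your proposal is correct and follows essentially the same approach as the paper: both recognize that under the identification $\ell_{\pi_x(a)} \cong \mathbb{R}$ the set $\Pi_x^a(A)$ is the dilate of $P_{\pi_x(a)}(A)$ by the factor $|a-x|$, and then invoke the scaling law $\mathcal{H}^s(\lambda E) = \lambda^s \mathcal{H}^s(E)$ to obtain $c_{a,x,s} = |a-x|^s$. Your write-up is in fact slightly cleaner in handling the identification via the signed coordinate $t \leftrightarrow t\theta$ rather than via $|P_{\pi_x(a)}(y)|$, but the argument is the same.
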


\begin{proof}
The statement follows once we recognize that, since for any $y \in A$, we have:
\begin{equation*}
P_{\pi_x (a)} (y)  :=  [\pi_x (a) \cdot y] \pi_x (a) =  \bigg[\frac{1}{\vert a - x \vert}\big((a-x) \cdot y \big)\bigg] \pi_x (a).
\end{equation*}
In particular, since $\pi_x (a)$ is a unit vector, we have that
$$
\big\vert P_{\pi_x(a)} (y) \big\vert = \frac{\vert (a-x) \cdot y \vert}{\vert a - x \vert} 
$$
If we assume that $y \in A$, and identify the set $P_{\pi_x (a)} (A) \subset \ell_{\pi_x (a)}$ with the corresponding subset of $\mathbb{R}$ (as explained above) we see that
$$
\mathcal{H}^s \big(P_{\pi_x(a)} (A) \big) = \mathcal{H}^s \big(\vert a - x \vert^{-1} \, \Pi_x^a (A) \big) = |a - x|^{-s}\cdot  \, \mathcal{H}^s \big(\Pi_x^a (A) \big),
$$
which is \eqref{eq:keylemma} with $c_{a,x,s} = |a-x|^s$. 
\end{proof}

\section{Concluding the proofs}
\label{sec:proofs}

\subsection{Proofs of translation invariant results}

First, we prove Theorem \ref{thm-main}.(2). The proofs of (1) and (3) follow by essentially the same argument, but with the different definitions of $E(A)$ and by applying the corresponding bounds in Proposition \ref{prop-measure_ESE}.

\begin{proof}[Proof of Theorem \ref{thm-main}.(2)] 
Let $x \in \mathbb{R}^n$ and $A \subset \mathbb{R}^n$ be given with $\dim A = s > \frac{n+1}{2}$. Note that, by Proposition \ref{prop-zeromeasureradproj} and Proposition \ref{prop-measure_ESE}, we have
\begin{equation}
    \dim \pi_x \big(A \big) \geq s - 1 > \frac{n-1}{2}.
\end{equation}
Additionally,
\begin{equation}\label{ePlusBound}
    \dim E_{+} (A) \leq n - s < \frac{n - 1}{2}.
\end{equation}
In particular, we have $\dim \pi_x(A) > \dim E_+(A)$. So, defining the set
$
\overline{A_x} = \pi_x \big(A\big)
$
as
$$
\overline{A_x} : = \pi_x (A) \setminus E_{+}(A),
$$
we have $\dim \overline{A_x} = \dim \pi_x(A) \geq s-1.$ Now, define the set
$$
A_x : = \pi_x^{-1} \big(\overline{A_x})\cap A = \{a \in A \setminus \{x\} : \pi_x (a) \in \overline{A_x} \}.
$$
By definition of $E_{+}(A)$, we have
$$
\mathcal{H}^1 \big(P_{\pi_x (a)} (A) \big) > 0, \quad \forall a \in A_x.
$$
We now may apply the Key Lemma \ref{lma-keylemma} at each point $a \in A_x$, giving
$$
\mathcal{H}^1 \big(\Pi_x^a (A)\big) > 0, \quad \forall a \in A_x.
$$

It remains to show that $\dim A_x = \dim A$. Clearly, $\dim A_x \leq \dim A$. We continue by way of contradiction. To this end, let $A_* : = A \setminus A_x$ and suppose for the sake of contradiction that $\dim A_* = \dim A$. Since we have removed $A_x$, we must have that:
$$
\pi_x (A_*) \subset E_+(A).
$$
But then, recalling the definition, we would necessarily have
\[
A_* \subset E_{+}(A)= \{\theta \in \mathbb{S}^{n-1}: \mathcal H^1(P_{\theta}(A)) = 0\}.
\]
Using \eqref{ePlusBound}, this implies that
$$
\dim A_* = \dim A \leq \dim E_{+} (A) + 1 \leq \frac{n-1}{2} + 1.
$$
This would imply 
$
\dim A \leq \frac{n+1}{2},
$
which is a contradiction. Hence, $\dim A_* < \dim A$, which means that $\dim A_x = \dim A,$ and we have concluded the proof of Theorem \ref{thm-main}.(2).
\end{proof}

A small modification of the above argument gives our Fourier dimensional Proposition \ref{prop:fourierdimension}.

\begin{proof}[Proof of Proposition \ref{prop:fourierdimension}]
We assume that $A \subset \mathbb{R}^n$ satisfies $\dim A > 1$ and $\dim_F A \geq u$ for some $u \in (0,1]$. By Proposition \ref{prop-zeromeasureradproj}, the assumption regarding the Hausdorff dimension of $A$ guarantees that $\dim \pi_x (A) \geq s -1 > 0.$ We will next utilize the assumption on the Fourier dimension to show that
\begin{equation}\label{eq:emptyexceptionalset}
E_u (A) : = \{\theta \in \mathbb{S}^{n-1} : \dim \big(P_{\theta} (A)\big) < u \} = \emptyset.
\end{equation}
Since $\dim_F A \geq u$, for any $\epsilon > 0$, then by definition of Fourier dimension, there necessarily exists a measure $\mu \in \mathcal{M} (A)$ such that:
\begin{equation}\label{fMu}
\vert \hat{\mu} (\xi) \vert \lesssim \vert \xi \vert^{-\frac{u}{2} + \epsilon}.
\end{equation}
Recall that $\ell_\theta$ is the line through the origin with direction $\theta \in \mathbb{S}^{n-1}$. For each such $\theta$, let $\mu_{\ell_{\theta}} : = (\pi_{\theta})_{*} (\mu)$ denote the pushforward of this measure under the orthogonal projection map onto $\ell_\theta$. Then, $\mu_{\ell_\theta} \in \mathcal{M} \big(\pi_{\theta} (A) \big)$ and its Fourier transform is
$$
\widehat{\mu_{\ell_{\theta}}} (\xi) = \int_{\mathbb{R}^n} e^{-2 \pi i \xi \cdot \pi_{\theta} (x)} f(x) d \mu (x) = \int_{\mathbb{R}^n} e^{-2 \pi i \xi_{\theta} \cdot x} f(x) d \mu (x) = \hat{\mu} (\xi_{\theta}),
$$
where $\xi_{\theta}$ is defined as the unique point of intersection of the line $\ell_{\theta}$ and the fiber $\pi_{\theta}^{-1} (\xi)$. In particular, by \eqref{fMu}, we see that
$$
\vert \widehat{\mu_{\ell_{\theta}}} (\xi) \vert \lesssim \vert \xi \vert^{- \min \{\frac{1}{2}, \frac{u}{2} - \epsilon\}}.
$$
Since this holds for every $\epsilon > 0$, we have that
$$
\dim_F \big(\pi_{\theta} (A) \big) \geq \min \{1, u \}, \quad \forall \, \theta \in \mathbb{S}^{n-1}.
$$
In particular, since $\dim _F \pi_{\theta} (A) \leq \dim \pi_{\theta} (A)$, by Proposition \ref{fDimProp}, this proves \eqref{eq:emptyexceptionalset}. Together with the fact that $\dim \pi_x (A) > 0$, this shows that the set
$$
\overline{A}_x : = \pi_x (A) \setminus E_u (A),
$$
satisfies $\dim \overline{A}_x = \dim \pi_x (A) > 0$. The rest of the proof proceeds as in the proof of Theorem \ref{thm-main}.$(2)$ above.
\end{proof}

\subsection{Proofs of translation dependent results}

\begin{proof}[Proof of Theorem \ref{thm:main-translation}]
Here we prove Theorem \ref{thm:main-translation}.(2). As in the proof of Theorem \ref{thm-main}, the arguments of (1) and (3) are nearly identical, except with the different definitions of $E(A)$.

Fix $s > \frac{n}{2}$ and let $k = \lfloor \frac{n+1}{2} \rfloor$. Suppose $A,X \subset \mathbb{R}^n$ satisfy $\dim A = s$, $\dim X \geq s$, and $X$ has $k$-planar dispersion. Notice that by applying Proposition \ref{prop-measure_ESE} we have
\begin{equation}\label{eq-strictequalityexcep}
\dim E_{+} (A) \leq n - s < \frac{n}{2},
\end{equation}
where the last inequality is strict since $\dim A = s > \frac{n}{2}$.

We now apply Corollary \ref{cor:epsilonradproj} to $A$ and $X$. Let
$
\psi_n (A,X) = \min \{\dim X, \dim A, k\}.
$
Given, $\dim X \geq s = \dim A > \frac{n}{2}$, it follows that
$
\psi_n (A,X) \geq \frac{n}{2} > \dim E_+(A).
$
So, we define $X_A \subset X$ as
$$
X_A : = \{x \in X : \dim \pi_x \big(A \big) > n - s \}.
$$
Since $n-s \leq \psi_n(A,X) - \epsilon$ for some $\epsilon>0$, we know by Corollary \ref{cor:epsilonradproj} that $\dim (X_A) = \dim X$.

We now verify that for all $x\in X_A$, there exists a full dimensional subset $A_x\subset A$ such that $\mathcal H^1(\Pi_x^a(A))>0$ for all $a\in A_x.$ This verification is similar to that of Theorem \ref{thm-main}. Let $x \in X_A$, and define:
$$
\overline{A_x} : = \pi_x (A) \setminus E_{+}(A).
$$
By the definition of $X_A$, $\dim \pi_x(A) > \dim E_+(A)$ and thus we know that $\dim \overline{A_x} = \dim \pi_x (A) \geq \psi_n (A,X)$. Now, define the set $A_x$ as:
$$
A_x : = \pi_x^{-1} (\overline{A_x})\cap (A \setminus \{x\}) = \{a \in A \setminus \{x\} : \pi_x (a) \in \overline{A_x} \},
$$
which is chosen so that (by definition)
$$
\mathcal{H}^1 \big(P_{\pi_x (a)} (A) \big) > 0, \quad \forall a \in A_x.
$$
Applying the Key Lemma \ref{lma-keylemma} at each point $a \in A_x$ gives
$$
\mathcal{H}^1 \big(\Pi_x^a (A)\big) > 0, \quad \forall a \in A_x.
$$
\end{proof}

We finish with proof of Proposition \ref{prop:restricted} which assumes \textit{restricted geometric} structure on the set $\pi_x (A)$ (for some $x\in \R^n$). We will use a restricted exceptional set estimate of Zahl. Recall from the introduction that a \textit{non-degenerate curve} is a smooth map $\gamma : [0,1]\to \mathbb{S}^{n-1}$ such that for every $t \in [0,1]$, one has
\[
\det [\gamma(t), \gamma^{(1)}(t), \cdots , \gamma^{(n-1)}(t)] \neq 0.
\]
Then, Zahl's result in $\R^n$ (proven in $\R^2$ by Pramanik--Yang--Zahl \cite{pramanikyangzahl}) states:

\begin{theorem}[\cite{zahl2023maximalfunctionsassociatedfamilies}, Theorem 1.10] \label{thm:zahlresult}
    Let $A\subset \R^n$ and let $\gamma: [0,1]\to \mathbb{S}^{n-1}$ be a non-degenerate curve. Then, for $0< u \leq \min\{\dim A,1\}$,
    \[
    \dim \{t\in [0,1] : \dim P_{\gamma(t)}(A) < u\} \leq u.
    \]
\end{theorem}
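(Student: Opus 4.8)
The natural route is the potential-theoretic method of Kaufman and Mattila that underlies Proposition~\ref{prop-measure_ESE}, with the non-degeneracy of $\gamma$ supplying the extra input. This settles $n=2$ completely; for $n\ge 3$ the potential-theoretic estimate degenerates, and the step that replaces it --- a restricted Kakeya/Furstenberg-type non-concentration bound --- is the real content of the theorem.

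\emph{Reduction.} Since $\{t:\dim P_{\gamma(t)}(A)<u\}=\bigcup_{u'<u}\{t:\dim P_{\gamma(t)}(A)<u'\}$ and Hausdorff dimension is stable under countable unions, it suffices to prove $\dim E_{u'}\le u'$ for each fixed $u'<u$, where $E_{u'}:=\{t\in[0,1]:\dim P_{\gamma(t)}(A)<u'\}$. As $u'<u\le\min\{\dim A,1\}$, Frostman's lemma yields a Borel probability measure $\mu$ on $A$ with $\mu(B(x,r))\lesssim r^{s}$ for some $s\in(u',\dim A)$, so its Riesz energy $I_{u'}(\mu):=\iint|x-y|^{-u'}\,d\mu(x)\,d\mu(y)$ is finite. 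Assume, for contradiction, that $\dim E_{u'}>u'$, and fix a compactly supported probability measure $\sigma$ on $E_{u'}$ with $\sigma(B(t,r))\lesssim r^{v}$ for some $v\in(u',\dim E_{u'})$.

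\emph{The energy integral.} For $t\in[0,1]$ let $\mu_t:=(P_{\gamma(t)})_{*}\mu$, a probability measure supported on $P_{\gamma(t)}(A)\subset\ell_{\gamma(t)}\cong\R$. A measure on a line with finite $u'$-energy gives zero mass to every set of dimension $<u'$; hence $\dim P_{\gamma(t)}(A)<u'$ forces $I_{u'}(\mu_t)=\infty$ for every $t\in E_{u'}$, and it suffices to prove $\int_{E_{u'}}I_{u'}(\mu_t)\,d\sigma(t)<\infty$, a contradiction. Using $|P_{\gamma(t)}x-P_{\gamma(t)}y|=|\gamma(t)\cdot(x-y)|$ and Fubini, this integral equals
\[
\iint|x-y|^{-u'}\Big(\int_{E_{u'}}|\gamma(t)\cdot e_{x,y}|^{-u'}\,d\sigma(t)\Big)\,d\mu(x)\,d\mu(y),\qquad e_{x,y}:=\frac{x-y}{|x-y|},
\]
so everything reduces to controlling the inner one-dimensional integral $\int_{0}^{1}|\gamma(t)\cdot e|^{-u'}\,d\sigma(t)$: if this is $\lesssim 1$ uniformly in $e\in\mathbb{S}^{n-1}$, then the displayed quantity is $\lesssim I_{u'}(\mu)<\infty$ and we are done.

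\emph{The curvature input and the main obstacle.} By dyadic decomposition, the bound $\int_{0}^{1}|\gamma(t)\cdot e|^{-u'}\,d\sigma(t)\lesssim 1$ is equivalent to a uniform sublevel-set estimate for $\sigma(\{t:|\gamma(t)\cdot e|\le\delta\})$. Non-degeneracy enters via the identity $\big(\gamma(t)\cdot e,\dots,\gamma^{(n-1)}(t)\cdot e\big)=[\gamma(t),\dots,\gamma^{(n-1)}(t)]^{T}e$, whose left-hand side is therefore nonzero for $e\ne 0$; by compactness of $[0,1]\times\mathbb{S}^{n-1}$ one gets $\max_{0\le j\le n-1}|\gamma^{(j)}(t)\cdot e|\gtrsim 1$ uniformly. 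For $n=2$ this reads $|\gamma(t)\cdot e|+|\gamma'(t)\cdot e|\gtrsim 1$, so $\{t:|\gamma(t)\cdot e|\le\delta\}$ is a union of $O(1)$ intervals of length $\lesssim\delta$, giving $\sigma(\{t:|\gamma(t)\cdot e|\le\delta\})\lesssim\delta^{v}$ with $v>u'$; then the inner integral is $\lesssim\sum_{j\ge 0}2^{j(u'-v)}\lesssim 1$ and the theorem follows --- this is precisely the classical Kaufman energy argument. For $n\ge 3$ the function $t\mapsto\gamma(t)\cdot e$ may vanish to order up to $n-1$ (take $e$ orthogonal to $\gamma(t_0),\dots,\gamma^{(n-2)}(t_0)$, which is possible and forces $\gamma^{(n-1)}(t_0)\cdot e\ne 0$ by non-degeneracy), so one can build a $v$-Frostman $\sigma$ with any $v\in(u',u'(n-1))$ concentrated near such a $t_0$ for which the inner integral diverges: the uniform-in-$e$ bound is simply false. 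One must instead estimate the coupled double integral directly, which amounts to the non-concentration assertion that the directions $e_{x,y}$ realized by $\supp\mu$ cannot conspire with the bad directions for $\sigma$. Quantifying this is a restricted Kakeya/Furstenberg estimate for the one-parameter family of hyperplanes normal to $\gamma$, proved by polynomial partitioning and induction on scales; it is the content of Zahl's maximal-function theorem \cite{zahl2023maximalfunctionsassociatedfamilies} (the case $n=3$ being Pramanik--Yang--Zahl \cite{pramanikyangzahl}), and it is the genuine difficulty here --- the potential-theoretic scaffolding around it is routine.
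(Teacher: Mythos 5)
This statement is Theorem~1.10 of Zahl's paper, quoted here as an external black box; the paper contains no proof of it, so there is no internal argument to compare against. Judged on its own terms, your write-up is an accurate and well-informed reduction, but it is not a proof for $n\geq 3$. The scaffolding is correct and, for $n=2$, complete: the countable-union reduction to a fixed $u'<u$, the Frostman measures $\mu$ on $A$ and $\sigma$ on the putative exceptional set, the Fubini step reducing everything to the sublevel bound $\sigma(\{t:|\gamma(t)\cdot e|\le\delta\})\lesssim\delta^{v}$, and the derivation of that bound from $|\gamma(t)\cdot e|+|\gamma'(t)\cdot e|\gtrsim 1$ together constitute the classical Kaufman argument, and your diagnosis that the uniform sublevel bound genuinely fails for $n\geq 3$ (because $t\mapsto\gamma(t)\cdot e$ can vanish to order $n-1$) is also correct and is exactly why restricted projections are hard. (Incidentally, your attribution is the right one: Pramanik--Yang--Zahl handle $\R^3$, not $\R^2$ as the paper's parenthetical suggests; in $\R^2$ a non-degenerate curve sweeps out an arc of $\mathbb{S}^1$ and Kaufman's theorem already applies.)

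The gap is that for $n\geq 3$ you stop precisely where the theorem begins. The ``restricted Kakeya/Furstenberg non-concentration estimate'' to which you defer \emph{is} (an equivalent formulation of) the statement being proved: Zahl derives Theorem~1.10 as a consequence of his $L^p$ maximal function bounds for families of plates normal to $\gamma$, so invoking ``Zahl's maximal-function theorem'' to finish the argument is circular as a proof of this result. None of the actual machinery --- the discretization to $\delta$-plates, the broad--narrow or polynomial-partitioning decomposition, the induction on scales that exploits the non-degeneracy quantitatively at every scale rather than only through the order-of-vanishing bound --- appears in your sketch, and your own counterexample to the uniform sublevel estimate shows that no amount of potential-theoretic bookkeeping can substitute for it: one must couple the $t$-integral to the direction set $\{e_{x,y}\}$ generated by $\supp\mu$, which is the analytic heart of \cite{zahl2023maximalfunctionsassociatedfamilies}. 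As it stands, your proposal proves the $n=2$ case and correctly locates, but does not supply, the content of the theorem for $n\geq 3$.
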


We can now directly apply this result to obtain Proposition \ref{prop:restricted}.

\begin{proof}[Proof of Proposition \ref{prop:restricted}]
    By assumption, we have some $x\in \R^n$ such that there exists a non-degenerate curve $\gamma: [0,1]\to \mathbb{S}^{n-1}$ such that 
    \[
    \dim \{t \in [0,1] : \gamma(t) \in \pi_x(A)\} >u.
    \]
    Define $E_{u,\gamma}(A):=\{t\in [0,1] : \dim P_{\gamma(t)}(A) < u\}$. Applying \ref{thm:zahlresult} to this curve, we see that 
    \[
    \dim E_{u,\gamma}(A) < \dim \{t \in [0,1] : \gamma(t) \in \pi_x(A)\}.
    \]
    This implies that there exists a $u$-dimensional set of $t\in [0,1]$ such that direction $\gamma(t)$ lies in $\pi_x(A) \setminus E_{u,\gamma}(A)$. Hence, the same as in the proof of Theorem \ref{thm:main-translation}, one obtains that there exists a $u$-dimensional set of pins $A_x\subset A$ such that for all $a\in A_x$, there exists a $t\in [0,1]$ with $\pi_x(a) = \gamma(t)\in E_{u,\gamma}(A)^c$. The result then follows from the definition of $E_{u,\gamma}(A)$ and applying Key Lemma \ref{lma-keylemma}.
\end{proof}



\section{Trees and sharpness constructions}\label{sec:trees}

While the focus of most of the results have thus far been about the abundance of single dot products determined by pairs of points, we now turn our attention to several results concerning many more dot products determined by many more points. By following the graph decomposition algorithm in \cite{OT2020} due to Yumeng Ou and Krystal Taylor, Theorem \ref{thm-main} gives the following result on the abundance of distinct types of dot product $k$-trees. To state the result, we first set some notation.

Given a tree $T$ with vertices $v_1, \dots, v_{k+1}$ and edges $\{v_{1,1},v_{1,2}\},\dots\{v_{k,1},v_{k,2}\},$ and some set $A\subset \mathbb R^n$, we write $x\mapsto v$ if the point $x\in A$ is being considered as a vertex $v\in T.$ Define the set of $k$-tuples of dot products between pairs of points in some $(k+1)$-tuple corresponding to the vertices of $T$ to be $\mathcal T_T(A).$ Here, we can think of the $(k+1)$-tuples as determining $k$ edge weights for an isomorphic copy of $T$. Specifically,
\[\mathcal T_T(A):=\{ (\alpha_1, \dots, \alpha_k): (x_1, \dots, x_{k+1})\in A^{k+1}, x_j \mapsto v_{i,1}, x_{j'} \mapsto v_{i,2}, x_j\cdot x_{j'} = \alpha_i\}.\]

For example, if we wanted to count how many triples of points chosen from a set $A\subset \mathbb R^n$ determine a pair of dot products, $(\alpha_1,\alpha_2)$, then we would have the tree $T$ be a 2-path with vertices $v_1, v_2, v_3,$ and edges $\{v_1,v_2\}$ and $\{v_2,v_3\}.$ We would consider all triples of points $(x_1,x_2,x_3)$ chosen from $A$ where $x_1\cdot x_2 = \alpha_1$ and $x_2\cdot x_3 = \alpha_2.$ In analogy to the pinned results above, we can also consider the pinned version of $\mathcal T_T(A),$ which we denote by $\mathcal T_{T,v}^x(A)$ which is the same as $\mathcal T_T(A)$ except that we mandate that $x \mapsto v$ for every $(k+1)$-tuple of points.

\begin{theorem}
Given a subset $A\subset \mathbb R^n,$ with $n\geq 2$ and $\dim A > \frac{n+1}{2},$ and a tree on $k$ vertices $T$ with vertex $v$, there exists a set $A_v\subset A$ so that for all $x\in A_v,$
\[\mathcal H^k\left(\mathcal T_{T,v}^x(A)\right)>0, \text{ for all } x\in A_v.\]
\end{theorem}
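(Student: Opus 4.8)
The plan is to prove the theorem by induction on the number of edges in the tree $T$, peeling off one leaf at a time and applying the pinned dot product estimate Theorem \ref{thm-main}.(2) at each stage, exactly in the spirit of the graph decomposition algorithm of Ou--Taylor \cite{OT2020}. The base case is a tree with a single edge $\{v, w\}$: here $\mathcal{T}_{T,v}^x(A) = \Pi_0^x(A)$ (the pinned dot product set with pin $x$), and Theorem \ref{thm-main}.(2) applied with the translation parameter $0$ gives a full-dimensional set $A_v \subset A$ so that $\mathcal{H}^1(\Pi_0^x(A)) > 0$ for all $x \in A_v$, which is precisely the $k=1$ statement.

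For the inductive step, suppose the result holds for all trees with fewer than $k$ edges. Given a tree $T$ on $k+1$ vertices with distinguished vertex $v$, pick a leaf $w \neq v$ of $T$ with unique neighbor $v'$, and let $T' = T \setminus \{w\}$ be the tree on $k$ vertices obtained by deleting $w$ and its pendant edge; note $v \in T'$. The key observation is that
\[
\mathcal{T}_{T,v}^x(A) = \bigcup_{\text{admissible fillings of } T'} \Big( (\text{dot products along } T') \times \Pi_0^{x_{v'}}(A) \Big),
\]
where $x_{v'}$ is the point assigned to $v'$; that is, the $k$th coordinate (the weight of the edge $\{v', w\}$) ranges over $\{x_{v'} \cdot y : y \in A\} = \Pi_0^{x_{v'}}(A)$, while the first $k-1$ coordinates are governed by the subtree $T'$. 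Concretely, I would first apply Theorem \ref{thm-main}.(2) to produce a full-dimensional set $A_{v'} \subset A$ of ``good pins for the leaf edge'' so that $\mathcal{H}^1(\Pi_0^{b}(A)) > 0$ for every $b \in A_{v'}$. Then I apply the inductive hypothesis to the smaller tree $T'$, but \emph{rooted at a vertex that sees $v'$}: one runs the algorithm so that the output guarantees, for a full-dimensional set of choices of the point $x_{v'}$ assigned to $v'$, that the $(k-1)$-dimensional tree-weight set for $T'$ has positive $\mathcal{H}^{k-1}$ measure, and moreover one arranges that $x_{v'}$ can be taken to lie in the full-dimensional set $A_{v'}$ (intersecting two full-dimensional subsets of $A$ obtained this way — this is where one must be slightly careful, and is the standard way the Ou--Taylor pigeonholing works: at the stage one fixes $x_{v'}$, one restricts to the full-dimensional subset on which both conclusions hold simultaneously). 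Finally, a Fubini/product-measure argument combines $\mathcal{H}^{k-1}(\text{weights of } T') > 0$ with $\mathcal{H}^1(\Pi_0^{x_{v'}}(A)) > 0$ to conclude $\mathcal{H}^k(\mathcal{T}_{T,v}^x(A)) > 0$ for all $x$ in the resulting full-dimensional set $A_v$.

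The main obstacle — and the part that requires genuine care rather than routine bookkeeping — is the \textbf{bookkeeping of the pins and the order of the leaf-peeling}, i.e., making sure that the ``good pin'' sets produced at different stages of the recursion are compatible. Specifically, when we strip the leaf $w$ attached at $v'$, the inductive hypothesis for $T'$ gives us control for a full-dimensional set of points playing the role of some vertex of $T'$, but we need that control precisely for the vertex $v'$ (so that we may then feed in the leaf-edge estimate at $v'$), and simultaneously we need $v'$ to land in $A_{v'}$. The resolution is the one used in \cite{OT2020}: always peel leaves in an order compatible with a fixed ordering of the vertices, and at each stage pass to a full-dimensional subset of $A$ on which all finitely many previously-established ``positive measure'' conclusions hold — since a finite intersection of full-dimensional Borel subsets of $A$ need not be full-dimensional in general, one instead uses that each bad set is contained in a projection-exceptional set of strictly smaller dimension, so the union of finitely many of them is still of dimension $< \dim A$, leaving a full-dimensional good set. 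Once this scheme is set up, the measure-theoretic combination step is a routine application of Fubini for Hausdorff measures on product sets, and the dimension count at each stage is exactly the one already carried out in the proof of Theorem \ref{thm-main}.(2).
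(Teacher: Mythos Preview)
Your approach is essentially the same as the paper's: the paper does not give a standalone proof but simply asserts that the result follows by feeding Theorem \ref{thm-main}.(2) into the Ou--Taylor graph decomposition algorithm from \cite{OT2020}, which is exactly the leaf-peeling induction you describe. One small imprecision worth tightening: in your inductive step you speak of applying the inductive hypothesis to $T'$ ``rooted at a vertex that sees $v'$,'' but the statement is pinned at $v$, not $v'$; the clean way to run the Ou--Taylor scheme here is to build the tree outward from $v$ (equivalently, peel leaves in an order that terminates at $v$), applying Theorem \ref{thm-main}.(2) at each stage to the full-dimensional subset produced by the previous stage, so that the nested ``good pin'' sets are automatically compatible and the Fubini step goes through.
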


In a similar vein, we have two constructions with the measure of $(k+1)$-tuples of points corresponding to a dot product $k$-tree of a given type is large. One of the reasons such constructions have been studied is they have served to show when a given technique can go no further on its own. For example, the initial argument of Falconer from \cite{Falconer_1985} hinged on showing that the measure of the set of pairs of points that gave any fixed distance must be small. Specifically, he showed that if $s>\frac{n+1}{2}$, and $\mu$ was a Frostman measure on $\mathbb R^n,$
\[\int\int |x-y|^{-s}d\mu(x)d\mu(y)<\infty\Rightarrow (\mu\times\mu)\{(x,y):1\leq |x-y| \leq 1+\epsilon\}\lesssim \epsilon,\]
and used this to show that the Lebesgue measure of $\Delta(A)$ must be positive essentially by pigeonholing the mass of the product measure and taking the limit as $\epsilon \rightarrow 0.$ In \cite{mattila1987}, Mattila showed that this approach was tight in the sense that there are measures like $\mu$ above for $s<\frac{d+1}{2}$ that will fail the product measure estimate. That is, mass concentrates in the product measure too much for a pigeonholing scheme to work. This indicated that fundamentally new techniques had to be developed to lower the threshold below $\frac{n+1}{2}.$ As such, there are analogous constructions in \cite{EIK11}, \cite{IS20}, and \cite{IS16}. The constructions here follow techniques used in those papers as well as ideas from \cite{AGHS} and \cite{KMS}. Here, we suppose $\vec \alpha = (\alpha_1, \dots, \alpha_k)$ and introduce the notation $\mathcal T_{T}(A; \vec \alpha, \epsilon)$, which is the set of $(k+1)$-tuples of points in $A$ forming a dot product $k$-tree whose edge weights correspond to the vector $\vec \beta,$ where $|\beta_i-\alpha_i|<\epsilon$ for each entry.

\begin{theorem}\label{manyTrees}
Given a natural number $n\geq 2$, an $\epsilon>0,$ and a tree on $k$ edges, $T$, whose vertices can be two-colored with $W$ being its smallest color class, there exists a $k$-tuple $\vec \alpha,$ and a set $A\subset \mathbb R^n$ with $\dim A = s < \max\left\{n-1,\frac{n+1}{2}\right\}$ supporting a measure $\mu$ for which
\[\mu^{k+1}(\mathcal T_T(A;\vec\alpha, \epsilon))>\epsilon^{s(k+1)\frac{n-1}{n+1}}+\epsilon^{s|W|}.\]
\end{theorem}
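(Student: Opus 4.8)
## Proof Plan for Theorem~\ref{manyTrees}

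\noindent\textbf{Overall strategy.} The plan is to build $A$ as a Cartesian-type fractal product adapted to the dot-product functional, mimicking the distance-set constructions of Mattila~\cite{mattila1987} and the refinements in \cite{EIK11,IS20,IS16,AGHS,KMS}. Concretely, I would split coordinates into two blocks: one block on which the set resembles a $\delta$-neighborhood of a lower-dimensional Ahlfors-regular fractal (this forces ``too much'' concentration of pairs along a fixed dot-product level, exactly the phenomenon that breaks the pigeonhole at $s < \frac{n+1}{2}$), and possibly a second block chosen so that some coordinates are a single rescaled Cantor set of dimension $s$ supported near a hyperplane, so that all the dot products $x_j\cdot x_{j'}$ cluster near prescribed values $\alpha_i$. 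The measure $\mu$ is the natural self-similar (Frostman) measure on $A$, scaled to be a probability measure, with $\mu(B(x,r))\approx r^s$.

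\noindent\textbf{Key steps, in order.}
\begin{enumerate}
    \item \emph{Reduce to a single-edge estimate.} First I would show that for a suitable choice of $A$, $\mu$, and a single value $\alpha$, one has the lower bound
    \[
    \mu^2\big(\{(x,y)\in A^2 : |x\cdot y - \alpha| < \epsilon\}\big) \;\gtrsim\; \epsilon^{\,2s\frac{n-1}{n+1}}
    \]
    (this is the dot-product analogue of Mattila's sharpness example for the distance functional; the exponent $\frac{n-1}{n+1}$ is precisely the deficit between $s$ and the critical exponent when $s<\frac{n+1}{2}$). The construction should be essentially the one used for distances: place $A$ near a sphere/hyperplane so that $|x\cdot y - \alpha|<\epsilon$ is, up to harmless factors, a $\sqrt\epsilon$- or $\epsilon$-neighborhood condition on a lower-dimensional subset.
    \item \emph{Two-color the tree and propagate along edges.} Using the hypothesis that $T$ is bipartite with color classes of sizes $|W| \le k+1-|W|$, I would assign the points $x_1,\dots,x_{k+1}$ their colors and observe that $\mathcal T_T(A;\vec\alpha,\epsilon)$ contains (in a $\mu^{k+1}$-measure sense) the set of tuples where each color-$W$ vertex is forced into a small set determined by its neighbors. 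Running the single-edge estimate across the $k$ edges of the tree and multiplying, I get a lower bound of the form $\epsilon^{s(k+1)\frac{n-1}{n+1}}$ from the ``generic'' propagation, and a competing lower bound of the form $\epsilon^{s|W|}$ coming from the alternative of fixing only the $|W|$ small-color vertices and letting the large-color class move freely. Taking the better of the two (hence the ``$+$'' in the statement, which is harmless since we only need a lower bound) gives the claimed inequality.
    \item \emph{Dimension bookkeeping.} Finally I would check that the construction can be carried out with $\dim A = s$ for any prescribed $s < \max\{n-1,\frac{n+1}{2}\}$: the two regimes $s<\frac{n+1}{2}$ and $s<n-1$ correspond to the two mechanisms (concentration of a single dot product vs.\ the one-variable-free mechanism giving the $\epsilon^{s|W|}$ term), and one takes whichever construction realizes the given $s$.
\end{enumerate}

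\noindent\textbf{Main obstacle.} The technical heart is Step~1: producing a genuinely \emph{sharp} concentration example for the dot-product functional, i.e.\ verifying that a Mattila-type fractal near a hyperplane actually yields the exponent $2s\frac{n-1}{n+1}$ and not something weaker. Unlike $|x-y|$, the bilinear form $x\cdot y$ is not translation-invariant, so the level set $\{x\cdot y = \alpha\}$ degenerates near the origin, and one must position $A$ away from $0$ and control the Jacobian of $y\mapsto x\cdot y$ uniformly on $A\times A$; getting the power of $\epsilon$ exactly right requires carefully counting how many $\delta$-cells of the product fractal meet the slab $\{|x\cdot y-\alpha|<\epsilon\}$ at each scale. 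Once this counting is pinned down, Steps~2 and~3 are bookkeeping analogous to the tree arguments in \cite{OT2020} and the sharpness constructions cited above.
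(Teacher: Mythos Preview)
Your plan is broadly correct and lands on the same two mechanisms the paper uses, but the paper organizes the argument differently in a way worth noting. Rather than a single Cartesian-type set $A$ that simultaneously produces both exponents, the paper gives \emph{two separate constructions}, one per term. For the $\epsilon^{s|W|}$ term (the regime $s<n-1$), the paper follows \cite{AGHS}: take $k$ parallel $(n-1)$-hyperplanes intersected with $s$-dimensional fractals, let the large color class $U$ sit anywhere in its hyperplane, and confine the small color class $W$ to an $\epsilon$-tube around the common radial line; this is exactly your ``fix the $|W|$ vertices, let the rest move freely'' idea, made concrete. For the $\epsilon^{s(k+1)\frac{n-1}{n+1}}$ term (the regime $s<\frac{n+1}{2}$), the paper does \emph{not} redo the single-edge sharpness computation you flag as the main obstacle; it instead imports the sets $E,F$ and the pair estimate directly from \cite{IS20} (Theorem~1.2 and Section~2.2 there), sets $A=E\cup F$, and runs an induction on $k$ by peeling off a leaf---this is the rigorous version of your ``propagate the single-edge estimate along the tree.'' So your Step~1 obstacle is real but is handled in the paper by citation rather than by a new argument, and your Step~2 ``multiply across edges'' should be replaced by the leaf-removal induction to correctly account for shared vertices. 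With those two adjustments your outline matches the paper's proof.
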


We provide a sketch of the proof, as it follows by a series of minor modifications of rather detailed constructions explained completely in \cite{AGHS, KMS, IS20}. To begin recall some basic facts about trees. A vertex adjacent to exactly one other vertex in $T$ is called a leaf. It is well-known that any tree must have at least two leaves. It is also well-known that the vertices of any tree $T$ can be two-colored, or partitioned into two subsets such that no pair of vertices in either subset are adjacent.

\subsection{First exponent}
The bound $n-1$ comes from a simple construction based on the proof of Proposition 2.6 from \cite{AGHS}. We two-color the tree, and call the color classes $U$ and $W$. Without loss of generality, suppose that $|U|\geq |W|.$ Label the vertices from $U$ as $u_1, u_2, \dots, u_{|U|}.$ Next, label the neighbors of $u_1$ as $w_{1,1}, w_{1,2}, \dots,$ until all neighbors of $u_1$ have a label. Then label the unlabeled neighbors of $u_2$ as $w_{2,1}, w_{2,2}, \dots,$ until all neighbors of $u_1$ and $u_2$ have a label. Continue in this way, greedily labeling the unlabeled neighbors of successive $u_j,$ until all vertices have a label.

To construct the set $A$, pick $k$ parallel $(n-1)$-hyperplanes, intersect them with suitable fractal sets of Hausdorff dimension $s\leq (n-1)$ and call the resulting sets $h_i$. Now we show that this satisfies the conclusion of the theorem. Call the unique radial line that passes through all of the hyperplanes $\ell.$ Now, any $x_1$ in $h_1$ will have a dot product in the range $(\alpha_1+\epsilon,\alpha_1-\epsilon)$ with any point in $h_2$ within $n^{-n}\epsilon$ of $\ell$. So we select $x_1$ (corresponding to $u_1$) anywhere on $h_1.$ Then any point in this small neighborhood of $\ell$ on $h_2$ can serve as $x_2$ (corresponding to $w_{1,1}$). We continue, selecting the $x_j$ corresponding to vertices from $U$ anywhere in their respective intersected hyperplanes, and the $x_j$ corresponding to vertices from $W$ are chosen from their respective intersected hyperplanes, but within $n^{-n}\epsilon$ of $\ell$. Putting this all together, we get
\[\mu^{k+1}(\mathcal T_T(A;\vec\alpha, \epsilon))>\epsilon^{s|V|}.\]

\subsection{Second exponent}

The bound $\frac{n+1}{2}$ comes from the measure constructed in Section 2.2 of \cite{IS20}. The basic idea there is to construct two subsets of $\mathbb R^n,$ called $E$ and $F$, each with a prescribed Hausdorff dimension $s<\frac{n+1}{2}.$ For any $\epsilon>0,$ these sets have the property that for a positive proportion of the $x\in E$, the measure of the set of $y\in F$ whose dot product is within $\epsilon$ of 1 is substantial. In the end, we will set $A:=E\cup F.$

With the sets $E$ and $F$ in tow, we proceed by induction on $k.$ In the base case, when $k=1,$ we simply appeal to Theorem 1.2 from \cite{IS20}. Now for the inductive step. If $k>1,$ then fix a leaf $v$ from the tree $T$. Define $T'$ to be the tree $T$ without the vertex $u$ and the edge $\{u,w\}$, where $w$ is some other vertex in $T$. Clearly $T'$ is a tree on $(k-1)$ edges. By our inductive hypothesis, the set $A$ satisfies
\[\mu^{k}(\mathcal T_T(A;\vec\alpha, \epsilon))>\epsilon^{sk\frac{n-1}{n+1}}.\]
Without loss of generality, suppose that at least half of the measure above is contributed by $k$-tuples of points whose point $y$ corresponding to the vertex $w$ comes from the set $F.$ Now we just need to note that a positive proportion of these points $y\in F$ have substantial measure's worth of points $x\in E$ so that $x\cdot y\in(1-\epsilon,1+\epsilon).$

\printbibliography
                
\end{document}